\newcommand{\C}{\mathbb{C}}
\newcommand{\D}{\mathbb{D}}
\renewcommand{\H}{\mathbb{H}}
\newcommand{\bL}{\mathbb{L}}
\newcommand{\R}{\mathbb{R}}
\newcommand{\wC}{\widehat \C}
\newcommand{\wR}{\widehat \R}
\def\PSL{\operatorname{PSL}}
\def\SLE{\operatorname{SLE}}
\def\Mo{M\"obius}
\newcommand{\ii}{\mathfrak{i}}
\newcommand{\ee}{\mathrm{e}}
\newcommand{\dd}{\mathrm{d}}
\newcommand{\eps}{\varepsilon}
\newcommand{\g}{\gamma}
\newcommand{\Om}{\Omega}
\newcommand{\go}{\omega}
\newcommand{\om}{\omega}
\newcommand{\gO}{\Omega}
\renewcommand{\O}{\Omega}
\newcommand{\gs}{\sigma}
\renewcommand{\Im}{\textnormal{Im}}
\renewcommand{\Re}{\textnormal{Re}}
\newcommand{\gve}{\varepsilon}
\newcommand{\gvp}{\varphi}
\newcommand{\half}{\frac{1}{2}}
\newtheorem{thm}{Theorem}[section]
\newtheorem{lemma}[thm]{Lemma}
\newtheorem{cor}[thm]{Corollary}
\newtheorem{prob}[thm]{Problem}
\theoremstyle{definition}
\newtheorem{remark}[thm]{Remark}
\newtheorem{definition}[thm]{Definition}
\newtheorem{example}[thm]{Example}
\newcommand{\blue}{\textcolor{black}}
\begin{document}

\title{Piecewise geodesic Jordan curves I: weldings, explicit computations, and 
Schwarzian derivatives}
\author{Donald Marshall\footnote{University of Washington \emph{dmarshal@uw.edu}},\qquad Steffen Rohde\footnote{University of Washington \emph{rohde@uw.edu}}, \qquad  Yilin Wang\footnote{ETH Z\"urich \emph{yilin.wang@math.ethz.ch}}}
\maketitle

\begin{abstract} 
We consider Jordan curves of the form $\gamma=\cup_{j=1}^n \gamma_j$ on the Riemann sphere for which each $\gamma_j$ is a hyperbolic geodesic in $(\wC \smallsetminus \gamma)\cup \gamma_j$. These Jordan curves are characterized by their conformal welding being piecewise M\"obius. We show that the Schwarzian derivatives of the uniformizing mappings of the two regions in $\wC\smallsetminus \gamma$ form a rational function with at most second-order poles at the endpoints of $\gamma_j$ and that the poles are simple if the curve has continuous tangents.
A key tool is the explicit computation of all \blue{$C^1$} {\it geodesic pairs}, namely \blue{$C^1$} chords $\gamma=\gamma_1\cup\gamma_2$ in a simply connected domain $D$ such that $\gamma_j$ is a hyperbolic geodesic in $D\smallsetminus \gamma_{3-j}$ for both $j=1$ and $j=2$.

\end{abstract}

\section{Introduction}

We call a Jordan curve $\gamma=\cup_{j=1}^n \gamma_j\subset \wC = \mathbb C \cup\{\infty\}$  {\it piecewise geodesic} if each {\it edge} $\gamma_j$ is a hyperbolic geodesic in the simply connected domain $\wC\smallsetminus(\gamma\smallsetminus \gamma_j)$. Piecewise geodesic curves \blue{which are also $C^1$} arise naturally as minimizers of a certain functional related to Loewner theory and the Weil--Petersson metric on the universal Teichm\"uller space, and we explore these connections in our companion paper \cite{MRW}.  
In this paper, we focus on their geometric function-theoretic aspects, including explicit constructions \blue{of geodesic pairs} and computations of the Schwarzian derivatives of the uniformization map.

This type of geodesic property was motivated by a question of Wendelin Werner, and was also studied in \cite{peltola_wang} for the configuration of several disjoint chords in a simply connected domain and in \cite{bonk2021canonical} for the case of two disjoint arcs in $\wC$.

It is natural to ask if the piecewise geodesic Jordan curves consist of geodesics for the hyperbolic metric of a punctured sphere (with punctures at the endpoints of the $\g_j$, called {\it vertices}). In Lemma \ref{circle}, we show that this is only the case if $\g$ is a circle. 

A crucial role in our investigation is played by  {\it geodesic pairs}, defined in Section~\ref{sec:geodesic_pair} as chords $\gamma=\gamma_1\cup\gamma_2$ in a simply connected domain $D$ such that $\gamma_j$ is a hyperbolic geodesic in $D\smallsetminus \gamma_{3-j}$ for both $j=1$ and $j=2$.
If $\gamma=\cup_{j=1}^n \gamma_j$ is a piecewise geodesic Jordan curve, then two consecutive edges $\g_j,\g_{j+1}$ form a geodesic pair in the complement of the other edges.
Not all geodesic pairs are $C^1$ smooth (there can be logarithmic spirals at the vertex, see Example \ref{nonsmooth}).
Examples of smooth geodesic pairs first appeared in \cite{W1} as the minimizer of a certain Loewner energy among all chords passing through a given point $\zeta$.
There the description was rather indirect (as limits of $\SLE_{\kappa}$ as $\kappa\to 0$ and by means of their Loewner driving function, see also \cite{Mesik}). We 
give an explicit construction when $D=\D$ by exhibiting a conformal map of the regions $\D \smallsetminus (\gamma_1\cup\gamma_2)$ onto the upper and lower half-planes. Among other things, this allows for an explicit computation of the Schwarzian derivative of the map. 

In Section~\ref{sec:welding}, we consider a notion of conformal welding tailored to the setting of a simply connected domain bisected by a chord. A key observation is Lemma \ref{hypgeo} that characterizes hyperbolic geodesics as chords whose {\it welding map} is \Mo{} on its interval of definition. This is applied to describe the welding maps of geodesic pairs (Corollary~\ref{weldpairs}), to prove the uniqueness of $C^1$ geodesic pairs through a given point
(Theorem \ref{unique}), and again later in Section 4 to characterize piecewise geodesic Jordan curves by their welding.

Corollary \ref{pw} in Section~\ref{sec:pwcurves} states that a Jordan curve $\g$ is piecewise geodesic if and only if its welding homeomorphism is 
{\it piecewise \Mo} (see Section~\ref{sec:welding} for the definition). 
Of particular interest are  $C^1$ piecewise geodesic Jordan curves, \blue{as indicated in the first paragraph}. Theorem \ref{pwclosed} gives an explicit computation of the Schwarzian derivatives of the two Riemann maps of the two sides of $\g$ and shows that they form a rational function with simple poles at the vertices of $\gamma.$
Theorem \ref{weldthm} characterizes their welding homeomorphisms and can be viewed as a parametrization of the space of $C^1$ piecewise geodesic Jordan curves. 
We also discuss the case of non-$C^1$ piecewise geodesic Jordan curves and show that they form logarithmic spirals at each point where they are not $C^1$. 
In this case, the Schwarzian derivative of the two Riemann maps has second-order poles. In Theorem \ref{schwarzian}, we determine their coefficients in terms of the geometric {\it spiral rate} (see Definition \ref{defspiralrate}). 

Finally, in Section~\ref{sec:fourlegs}, we investigate symmetry properties of piecewise geodesic curves with four edges and prove that in the $C^1$ case, the curve is invariant under all four automorphisms of the sphere punctured in the four vertices of $\gamma.$

\bigskip 

 \noindent {\bf Acknowledgment}: We thank Mario Bonk for discussions. We also thank an anonymous referee for a careful reading and for comments that led to improvements of the manuscript.
 S.R. is supported by NSF Grants DMS-1700069, DMS‐1954674 and DMS-2350481. Y.W. is partially supported by NSF grant DMS-1953945.

\section{Geodesic pairs}  \label{sec:geodesic_pair}

A {\it Jordan curve}
in $\C$ is a continuous injective image of the unit circle in $\C$. A similar
definition holds for curves in the extended plane, $\wC$, 
where continuity refers to the spherical metric in the Riemann sphere. 
If $D\subsetneq \C$ is a simply
connected region, then by the Riemann mapping theorem, there is a conformal (i.e. one-to-one and analytic) 
map $\varphi$ of the unit disk $\D$ onto $D$. If $\partial D$ is a Jordan
curve, then $\varphi$ extends to be a one-to-one and continuous map of the closed disk $\overline \D$ onto $\overline D$ by Carath\'eodory's theorem. 
If $D$ is not bounded by a Jordan curve then we can
view $\varphi$ as a one-to-one map of $\partial\D$ onto the prime ends of $\partial D \subset
\wC$. 
Henceforth, $a\in \partial D$ means that $a$ is a prime end in 
$\partial D$.

Throughout this article, $\H=\{z\in \C: \Im z >0\}$ is the upper half-plane, $\bL=\{z\in\C: \Im z
< 0\}$ is the lower half-plane and $\D^*=\{z:|z|>1\}\cup\{\infty\}$ is the complement in the
extended plane $\wC$ of $\overline \D$. We will denote the positive real
axis by $\R^+$, the positive imaginary axis by $\ii\R^+$, and the extended real line $\wR = \mathbb R \cup \{\infty\}$.

If $D$ is simply connected and if $f$ is a conformal map of $\H$ onto $D$ then $\gamma=f(\ii\R^+)$ is the {\bf hyperbolic geodesic}
from the prime end $a=f(0)$ to 
another
prime end $b=f(\infty)$. 
Subarcs of a hyperbolic geodesic are called {\bf hyperbolic geodesic arcs}.
See Section I.4 in \cite{GM} for an equivalent definition in terms of shortest curves in the hyperbolic metric on $D$. \blue{Our definition of geodesic pairs below also makes sense in the hyperbolic geometry of regions that are not simply connected, but in this article, we restrict ourselves to simply connected regions.} We note that a hyperbolic geodesic is an analytic curve.

If $D \subsetneq \widehat\C$ and if $a,b$ are prime ends in $\partial D$, we say that $\gamma$ is a {\bf chord} in  $(D;a,b)$ if $\gamma$ has a continuous
injective parameterization $\gamma:(0,T) \to D$ such that $\gamma(t)\to a$ as $t\to 0$ and $\gamma(t) \to b$ as $t \to T$.  We allow $b=a$ in this
definition. 

\bigskip

\begin{definition}\label{gp}
Let  \blue{$D \subsetneq \wC$ be a simply connected domain,} $\zeta \in D$, and $a,b\in \partial D$. We define a {\bf geodesic pair} in $(D;a,b,\zeta)$ to be a chord $\gamma$ in $(D;a,b)$ such that $\gamma=\gamma_1\cup\gamma_2$
and $\gamma_1\cap\gamma_2=\{\zeta\}$ where $\gamma_1$ is the (hyperbolic) geodesic in $(D\smallsetminus \gamma_2 ; a,\zeta)$ and  $\gamma_2$ is the geodesic in $(D\smallsetminus
\gamma_1;\zeta,b)$. 
\end{definition}
\blue{We note that if $D = \wC \smallsetminus \{\delta\}$, with $\delta\in\wC$, then $\partial D = \{\delta\}$, hence $a = b = \delta$ in the above definition.}

\bigskip

From the definition of hyperbolic geodesics, if $\varphi$ is a conformal map defined on
$D$, then $\gamma$ is a geodesic pair in $(D;a,b,\zeta)$ if and only if $\varphi(\gamma)$ is a geodesic
pair in $(\varphi(D);\varphi(a),\varphi(b),\varphi(\zeta))$.

For orientation of the reader, we note that if $D$ is simply connected and if $\gamma$ is a
geodesic pair in $(D;a,b,\zeta)$ then $\gamma$ cannot be the union of two geodesic arcs in $D$ unless these arcs are subsets of the same geodesic in $D$. To see this, we may suppose $D=\D$
and $\zeta=0$. The
hyperbolic geodesic arcs in $\D$ from $0$ to $\partial \D$ are the radial line segments. By explicit computation, the only radial geodesic in $\D \smallsetminus
[0,1)$ is the interval $(-1,0]$. By a rotation, if $\gamma_1$ is a radial line segment, then the only
radial geodesic in $\D\smallsetminus \gamma_1$ is $-\gamma_1$.


We begin with an explicit construction of a geodesic pair in
$(\D;\ee^{\ii\theta},-\ee^{-\ii\theta},0)$.  We will see in Theorem~\ref{unique} that it is the only $C^1$ geodesic pair  in
$(\D;\ee^{\ii\theta},-\ee^{-\ii\theta},0)$. The $C^1$ regularity of the curve refers to the arclength parametrization. Equivalently, the unit tangent vector to the curve is continuous.
Similarly, we say that a curve is $C^{\alpha}$, if the arclength parametrization of the curve is $C^{k}$, where $k$ is the largest integer such that $k \le \alpha$, and its $k$-th derivative is $(\alpha - k)$-H\"older continuous.

Define  
\begin{equation}\label{Gc}  G_\theta(z)=\frac{1}{2}\left(z+\frac{1}{z}\right) - \ii c \log z,
\end{equation}
where $c=\sin\theta$, with $-\pi/2 \le \theta \le \pi/2$. 
Let 
\begin{equation}\label{Uc}
U_\theta=\{z: \Re (z) > -A ~{\rm and}~ \Im (z) > 0\}\cup\{z:  \Re (z) > A  ~{\rm and }~ \Im (z) <
0\}\cup (B,+\infty),
\end{equation}
where $A=(\pi/2)\sin \theta$ and $B=\cos \theta+\theta \sin\theta$.
Note $B \ge |A|$.
See Figure~\ref{jog}. 
As an aside, we remark that the functions $G_\theta$ can be used to compute the lift of an airfoil in classical
two-dimensional aeronautics.

\bigskip

\begin{figure}[h]
    \centering
    \includegraphics [width=.8\textwidth]{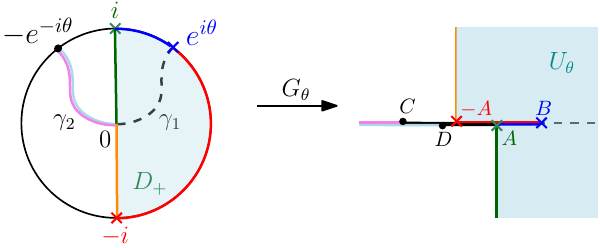}
    \caption{\label{jog} Construction of the geodesic pair $\gamma_1\cup\gamma_2$ in  $(\D;\ee^{\ii\theta},-\ee^{-\ii\theta},0)$ using the conformal map $G_\theta \colon D_+\to U_\theta$ sending $0$ to $\infty$. Here, $A =  (\pi/2) \sin \theta$, $B =  \cos \theta + \theta \sin \theta$, $C =  -A-(B+A)$, $D = A - (B-A)$, $\g_1 = G_\theta^{-1} ((B,\infty))$, and the analytic extension of $G_\theta^{-1}$ to $\mathbb C \smallsetminus (-\infty, B]$ welds $(-\infty, C)$ to $(-\infty, D)$ to form $\g_2$.}
\end{figure}

\bigskip

\begin{lemma}\label{Gcimage} The function $G_\theta$ is a conformal map of the half disk $D_+=\{z:\Re z >0 ~{\rm and}~ |z| < 1\}$ onto 
$U_\theta$, where the branch of $\log z$ is chosen
so that $-\frac{\pi}{2} \le \arg z \le \frac{\pi}{2}$. 
\end{lemma}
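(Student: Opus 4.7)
The plan is to show that $G_\theta$ is holomorphic on $D_+$, trace its boundary images on each of the three arcs making up $\partial D_+$ (the two halves of the diameter and the right semicircle), and finally apply the argument principle to conclude that $G_\theta$ is a conformal bijection onto $U_\theta$. Since $0 \notin D_+$ and the principal branch of $\log z$ is single-valued on the closed right half-plane minus the origin, $G_\theta$ is automatically holomorphic on $D_+$. Computing $G_\theta'(z) = (z^2 - 2icz - 1)/(2z^2)$ shows its critical points are $ic \pm \sqrt{1 - c^2} = e^{i\theta}$ and $-e^{-i\theta}$; the former lies on the semicircular boundary (and will turn out to be a fold point) while the latter lies in the left half-plane, so $G_\theta$ is locally conformal throughout the open half-disk.

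The next step is to compute the three boundary images. On the upper half of the diameter, $z = iy$ with $y \in (0, 1]$, the identity $\log(iy) = \log y + i\pi/2$ gives $G_\theta(iy) = A + ip(y)$ with $p(y) = \tfrac12(y - 1/y) - c\log y$; differentiation yields $p'(y) = ((y - c)^2 + \cos^2\theta)/(2y^2) > 0$, so $p$ is strictly increasing from $-\infty$ at $y = 0^+$ to $0$ at $y = 1$, and $G_\theta$ maps the upper diameter bijectively onto the downward ray $\{A + it : t \le 0\}$. A symmetric calculation using $\log(-is) = \log s - i\pi/2$ shows $G_\theta$ maps the lower diameter bijectively onto the upward ray $\{-A + it : t \ge 0\}$, with the single point $z = 0$ corresponding to $\infty$ on the sphere. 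On the semicircle $z = e^{i\phi}$, $\phi \in [-\pi/2, \pi/2]$, the value $G_\theta(e^{i\phi}) = h(\phi) := \cos\phi + \phi\sin\theta$ is real; since $h'(\phi) = \sin\theta - \sin\phi$ vanishes only at $\phi = \theta$, with $h(\pm\pi/2) = \pm A$ and $h(\theta) = B$, the semicircle is folded two-to-one onto the real segment $[-|A|, B]$, folding at the image $B$ of the critical point $e^{i\theta}$.

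As $z$ traces $\partial D_+$ once counterclockwise (down the diameter from $i$ through $0$ to $-i$, then along the semicircle from $-i$ through $1$ back to $i$), these three image arcs glue through $\infty$ at $z = 0$ to form the boundary of $U_\theta$ in $\wC$, traversed once with $U_\theta$ on the left. For any $w_0 \in U_\theta$, the argument principle applied to $G_\theta - w_0$ on a curve interior to $D_+$ approaching $\partial D_+$ identifies the number of preimages of $w_0$ in $D_+$ with the winding number of the image of $\partial D_+$ about $w_0$; a concrete check using a horizontal test ray shows this winding number is $1$ when $w_0 \in U_\theta$ and $0$ when $w_0 \notin \overline{U_\theta}$. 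Combined with the local conformality from the first step, this proves that $G_\theta$ is a conformal bijection of $D_+$ onto $U_\theta$.

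The main obstacle is the winding-number verification: since the semicircle folds two-to-one onto the slit-like real segment and the diameter image passes through $\infty$, one has to verify carefully that the back-and-forth along $[-|A|, B]$ contributes no net winding and that the two vertical rays combine with the fold to encircle $U_\theta$ in the correct orientation exactly once. A smaller, separate issue is the degenerate case $|\theta| = \pi/2$, in which $A = B$, the critical point $e^{i\theta}$ coincides with a corner of $D_+$ and the slit collapses; this can be handled by a direct verification or by taking limits in $\theta$.
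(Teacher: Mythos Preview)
Your proposal is correct and follows essentially the same approach as the paper: trace the image of the three boundary arcs of $D_+$, then apply the argument principle. The paper handles the singularity at $z=0$ slightly more concretely by excising a small disk $D_\eps$ and noting $G(\eps e^{it})\sim (2\eps)^{-1}e^{-it}$ to close up the boundary contour in $\C$, whereas you glue through $\infty$; but this is the same idea in different packaging.
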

\begin{proof}
Fix $\theta$ and write $G=G_\theta$ and
$U=U_\theta$. Then
\begin{equation}\label{Gprime}
G^\prime(z)=\frac{z^2-2\ii cz -1}{2z^2}.
\end{equation}
 
Note that for $-1 \le y \le 1$, $y\ne 0$,
$\dd G(\ii y)/\dd y>0$, 
so as $y$ decreases from $1$ to $0$,  $\Im\, G$ decreases from $0$ to $-\infty$ with 
$\Re\, G(\ii y)=G(\ii)=A$. Similarly as $y$ decreases from $0$ to $-1$, 
$\Im\, G$ decreases from $+\infty$  to $0$ 
with $\Re\, G(\ii y)=G(-\ii)=-A$. Furthermore, 
$\dd G(\ee^{\ii t})/ \dd t = -\sin t + \sin \theta $ is positive
on $(-\pi/2, \theta)$ and negative on $(\theta,\pi/2)$.
Thus $G(\ee^{\ii t})$ increases from $G(-i)=-A$ to $B=G(\ee^{\ii\theta})=\cos\theta + \theta\sin\theta$, 
with $\Im\, G=0$, then 
decreases to
$G(\ii)=A$, again with $\Im\, G=0$. We conclude that as $z$ traces $\partial D_+$, with
positive orientation, $G(z)$
traces $\partial U$ with positive orientation. 
Indeed, for $\eps$ sufficiently small, set $D_\eps=\{z: |z| < \eps\}$.
As $z$ traces the semi--circle $\{\eps \ee^{\ii t}: \pi/2> t > -\pi/2\}$, then 
$G(\eps \ee^{\ii t})\sim
(2\eps)^{-1}\ee^{-\ii t}$ traces a curve from $G(\eps \ii)$ to $G(-\eps \ii)$. Thus if $K$ is a
compact subset of $U$, and if $\eps$ is sufficiently small then 
$G(\partial (D_+\smallsetminus D_\eps))$  winds exactly once around each point of $K$. 
By the Argument Principle, $G$ is a conformal map of $D_+$ onto
$U$.
\end{proof}

Set $\gamma_1= G^{-1}[B,+\infty)$ and $\gamma_2=-\overline \gamma_1 : =\{-\overline z: z \in \gamma\}.$
Then $\gamma_1$ is a curve in $D_+$ from $\ee^{\ii\theta}$ to $0$, and $\gamma_2\subset D_-=\D \smallsetminus D_+$  
is the reflection of $\gamma_1$ about the imaginary axis, which we parameterize as a curve 
from $0$ to $-\ee^{-\ii\theta}$. 

\bigskip
\begin{cor}\label{discpair}  The curve $\gamma=\gamma_1\cup
\gamma_2$ is a $C^1$ geodesic pair in
$(\D;\ee^{\ii\theta},-\ee^{-\ii\theta},0)$ with a horizontal tangent at $0$. When $-\pi/2<\theta<\pi/2$, $\gamma$ is perpendicular to $\partial \D$ at $\ee^{\ii\theta}$
and $-\ee^{-\ii\theta}$. When $\theta=\pi/2$, $\gamma$ is a Jordan curve with both ends at $i$, forming three angles of $\pi/3$
with $\partial \D$. When $\theta=-\pi/2$, $\gamma$ is the reflection of this latter curve about $\R$.
\end{cor}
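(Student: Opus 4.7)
The plan is to certify the geodesic pair property by exhibiting an explicit conformal map from $\D \smallsetminus \gamma_2$ onto a slit plane that sends $\gamma_1$ to a half-line; the symmetric statement for $\gamma_2$ in $\D \smallsetminus \gamma_1$ then follows from the involution $z \mapsto -\bar z$, which preserves $\D$ and exchanges the two arcs.

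To build the map, I would extend $G_\theta$ from $D_+$ across both halves of the imaginary diameter by Schwarz reflection. Since $G_\theta$ sends the two halves into the vertical rays $\{\Re = \pm A\}$ lying in opposite half-planes of $U_\theta$, reflection yields two analytic extensions $G^{(1)}, G^{(2)}$ of $G_\theta$ to $D_-$, differing there by the constant $2A$ coming from the monodromy of $\log z$. On $\D \smallsetminus \gamma_2$ this multi-valuedness is harmless: $\gamma_2$ separates $D_-$ into two components $D_-^+$ and $D_-^-$ (adjacent to $(0,i)$ and $(-i,0)$ respectively), and setting $\tilde G := G_\theta$ on $D_+$, $G^{(1)}$ on $D_-^+$, and $G^{(2)}$ on $D_-^-$ gives a single-valued holomorphic function. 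Using $G^{(1)}(\gamma_2) = 2A - G_\theta(\gamma_1) = (-\infty, 2A-B]$ together with the identity $2A - B = (\pi-\theta)\sin\theta - \cos\theta$, an inventory of the images of the three pieces shows that $\tilde G\colon \D \smallsetminus \gamma_2 \to \C \smallsetminus (-\infty, B]$ is a conformal bijection with $\tilde G(\gamma_1) = [B, \infty)$. Post-composing with the principal branch of $w \mapsto i\sqrt{w - B}$ yields a conformal map onto $\H$ sending $\gamma_1$ to $i\R^+$, which is a hyperbolic geodesic of $\H$; this identifies $\gamma_1$ as the geodesic in $(\D \smallsetminus \gamma_2;\, e^{i\theta}, 0)$.

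For the tangent at $0$, the Laurent expansion $G_\theta(z) \sim 1/(2z)$ as $z \to 0$ shows that $\gamma_1 = G_\theta^{-1}([B,\infty))$ enters $0$ tangent to $\R^+$ with arclength tangent $-1$; the involution then gives $\gamma_2$ the matching tangent $-1$ at $0$, yielding $C^1$-smoothness with horizontal tangent. At $e^{i\theta}$ with $-\pi/2 < \theta < \pi/2$, one computes $G_\theta'(e^{i\theta}) = 0$ and $G_\theta''(e^{i\theta}) = \cos\theta\, e^{-2i\theta}$, so $G_\theta(z) - B \sim \tfrac{1}{2}\cos\theta\, e^{-2i\theta}(z - e^{i\theta})^2$; the preimage of $(B,\infty)$ issues from $e^{i\theta}$ in the directions $\pm e^{i\theta}$, and only the inward radial $-e^{i\theta}$ stays in $\D$, which is perpendicular to $\partial \D$. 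When $\theta = \pi/2$, $G_\theta'(z) = (z-i)^2/(2z^2)$ has a double zero at $i$ and $G_\theta(z) - B \sim -(z-i)^3/6$; of the three admissible directions $\arg(z-i) \in \{\pi/3, \pi, 5\pi/3\}$, only $5\pi/3$ lies inside $\overline{D_+}$, and by symmetry $\gamma_2$ issues in direction $4\pi/3$, so together with the two horizontal tangents of $\partial \D$ at $i$ the interior sector partitions into three equal angles of $\pi/3$. The case $\theta = -\pi/2$ follows by reflection across $\R$.

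The main obstacle is verifying the global injectivity of $\tilde G$: Schwarz reflection gives analyticity and local injectivity across $(0, i)$ and $(-i, 0)$, but one must check that the three image sets $U_\theta$, $\{\Re < A,\, \Im < 0\}$, $\{\Re < -A,\, \Im > 0\}$ are pairwise disjoint and that their union with the two reflected vertical rays fills $\C \smallsetminus (-\infty, B]$ exactly. The identity $2A - B = (\pi - \theta)\sin\theta - \cos\theta$ is the key geometric input, matching the image of the cut $\gamma_2$ under $G^{(1)}$ with the leftward continuation of the slit $(-\infty, B]$ in the target.
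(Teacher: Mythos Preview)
Your proposal is correct and follows essentially the same approach as the paper: extend $G_\theta$ via Schwarz reflection across the imaginary diameter to obtain a conformal map $\D\smallsetminus\gamma_2\to\C\smallsetminus(-\infty,B]$, then read off the tangent directions at $0$, $e^{i\theta}$, and (when $\theta=\pm\pi/2$) at $\pm i$ from local expansions of $G_\theta$. One harmless slip: the two analytic continuations across the upper and lower halves of the imaginary axis differ by $4A=2\pi\sin\theta$ (the full monodromy of $-ic\log z$), not $2A$; this does not affect your later computation $G^{(1)}(\gamma_2)=(-\infty,2A-B]$, which you obtain directly from the reflection formula rather than from the monodromy.
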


\begin{proof}
Reflect across the vertical lines in $\partial U \smallsetminus \{0\}$ and in $\partial D_+$.
By the Schwarz reflection principle, $G$ extends to be a conformal map of $\D \smallsetminus \gamma_2$ onto
$\C \smallsetminus (-\infty, B]$. Because $F(z)=B-z^2$ is a conformal map of $\H$ onto $\C \smallsetminus
(-\infty,B]$ with $F(\ii\R^+)=(B,+\infty)$, the curve $\gamma_1= G^{-1}(F(\ii\R^+))$ is a hyperbolic geodesic
in $\D \smallsetminus \gamma_2$. Moreover $H(z)=\overline{G(-\overline z))}$ 
is a conformal map of $\D \smallsetminus \gamma_1$ onto $\C\smallsetminus (-\infty,B]$ with $H(\gamma_2)=(B,+\infty)$
and so $\gamma_2$ is a
hyperbolic geodesic in $\D \smallsetminus \gamma_1$. Thus $\gamma=\gamma_1\cup\gamma_2$ is a geodesic pair in
$(\D;\ee^{\ii\theta},-\ee^{-\ii\theta},0)$.

We also note that  
$$\arg (zG(z)) =\arg\left(\frac{1}{2}(z^2+1)-\ii cz\log z\right) \xrightarrow[]{z\to 0}
0,$$ and $\arg G =0$ along $\gamma_1$, so that
$\gamma_1$ has a horizontal tangent at $0$. \blue{We conclude by (\ref{Gprime}) that 
$$\lim_{\genfrac{}{}{0pt}{}{z\to 0}{z\in \g_1}} \arg (-G') =0,$$
so that the unit tangent to $\g_1$ is continuous at $0$. By reflection, $\g_2$ has the same property} and hence $\gamma$ is $C^1$. 

Finally note that by (\ref{Gc}) and (\ref{Gprime}), $G$ is analytic in a neighborhood of $\ee^{\ii\theta}$ 
and asymptotic to
$d(z-\ee^{\ii\theta})^2$, with $d\ne 0$, near $\ee^{\ii\theta}$ such that the image of
$\gamma_1\cup \partial \D$ 
is contained in $\R$. Thus $\gamma_1$ must meet $\partial \D$ at right angles. A similar
statement holds for $\gamma_2$.

When $\theta=\pi/2$, the curve $\gamma$ forms a Jordan curve beginning and ending at $i$. In this case $B=A=\pi/2$, so that $U_\theta$ has an angle of $3\pi/2$ at $B$ and $[B,\infty)$ forms an angle of $\pi$ with $[-A,A]$, or 
two thirds of $3\pi/2$. The map $G$ is asymptotic to $K(z-\ii)^{3/2}$ near $i$.
Thus, the angle between $\gamma_1$ and $\partial \D$ at $i$ is $\pi/3=(2/3)(\pi/2)$. By reflection, the corollary follows.  
\end{proof}

\begin{remark} 
For $0<\theta<\pi/2$, $U_\theta$ (see (\ref{Uc}))  can be obtained from $U_{\pi/2}$
by removing the interval $J=[\pi/2,\cot\theta+\theta]$ and then scaling by
$\sin \theta$. If we remove the arc $\sigma=G_{\pi/2}^{-1}([\pi/2,\cot \theta
+\theta])$, together with its reflection $-\overline{\sigma}$ from $\D$ and let $f$ map $\D\smallsetminus (\sigma\cup {-\overline{\sigma}})$ onto $\D$
with $f(0)=0$ and $f'(0)>0$, then $f(\gamma_1\smallsetminus \sigma)\cup
f(\gamma_2\smallsetminus {-\overline{\sigma}})$ is the geodesic pair in $(\D;\ee^{\ii\theta},-\ee^{-\ii\theta},0)$ constructed above. Indeed, $G_\theta$ and $\sin\theta \cdot G_{\pi/2}\circ f^{-1}$ have the same image on $D^+$ and $i,0,-\ii$ have the same image as boundary points so that these two maps are the same. A similar
result holds for $-\pi/2 < \theta < 0$ using $U_{-\pi/2}$ and
$G_{-\pi/2}$. In this
sense, we can obtain all such geodesic pairs from the two Jordan curves
beginning and ending at $\pm \pi/2$, though the formulae are more
complicated because $f$ is not given explicitly. 
\end{remark}

For later use, we record some elementary computations:
We may write
\begin{equation}G^\prime(z)=\frac{z^2-2\ii cz
-1}{2z^2}=\frac{(z-\ee^{\ii\theta})(z+\ee^{-\ii\theta})}{2z^2} .
\end{equation}
By logarithmic differentiation,
the pre-Schwarzian of $G$ is
\begin{equation}\label{PG} P_G(z) := \frac{G^{\prime\prime}(z)}{G^\prime(z)}= 
\frac{-2}{z}+\frac{1}{z-\ee^{\ii\theta}} + \frac{1}{z+\ee^{-\ii\theta}}
\end{equation}
and the Schwarzian derivative of $G$ is
\begin{equation}\label{SG}S_G(z) :=
P_G^\prime(z)-\frac{1}{2}P_G^2(z)=
\frac{4\ii\sin\theta}{z}+R(z),
\end{equation}
where $R$ is rational with poles only at $\ee^{\ii\theta}$ and $-\ee^{-\ii\theta}$, and these poles have
order $2$.

\bigskip

Using the Riemann mapping theorem, we obtain easily a $C^1$ geodesic pair in other domains $(D; a,b)$ from the above construction.
If $D\ne \C$ is simply connected, $a,b \in \partial D$ and $\zeta \in D$, then let $\gamma_{a,\zeta}$ and
$\gamma_{\zeta,b}$ denote the hyperbolic geodesic arcs in $D$ from $a$ to $\zeta$ and from 
$\zeta$ to $b$, respectively.

\begin{thm}\label{Dgeopair} 
There is a geodesic pair 
$\gamma=\gamma_1\cup \gamma_2$ in $(D;a,b,\zeta)$ with continuous tangent at $\zeta$. 
The tangent to $\gamma$ at $\zeta$ bisects the angle between  $\gamma_{a,\zeta}$ and
$\gamma_{\zeta,b}$ at $\zeta$. 
If $\partial D$ has a tangent at $a$, or $b$, with $a\ne b$, then $\gamma$
meets $\partial D$ at right angles at $a$, respectively $b$. If $\partial D$ has a tangent at $a$ and $b=a$, then $\gamma$ forms three angles of $\pi/3$ with $\partial D$ at $a$.
\end{thm}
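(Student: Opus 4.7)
The plan is to reduce the general case to the explicit disc construction of Corollary~\ref{discpair} via the Riemann mapping theorem. First I would pick a conformal map $\varphi\colon D\to\D$ with $\varphi(\zeta)=0$. Since the Riemann map is unique up to a rotation about $0$, I can post-compose with a rotation to arrange $\varphi(a)=e^{i\theta}$ and $\varphi(b)=-e^{-i\theta}$ for some $\theta\in[-\pi/2,\pi/2]$; concretely, if $\varphi_0$ is an initial choice with $\varphi_0(a)=e^{i\alpha}$, $\varphi_0(b)=e^{i\beta}$, one sets $\theta=(\pi+\alpha-\beta)/2$, taking representatives mod $2\pi$ so that $\theta$ lies in $[-\pi/2,\pi/2]$. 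The value $\theta=\pm\pi/2$ corresponds exactly to the degenerate case $a=b$ (the two prime ends coincide on $\partial\D$). The point of this normalization is that $e^{i\theta}$ and $-e^{-i\theta}$ are mirror images across the imaginary axis, matching the symmetry of Lemma~\ref{Gcimage} and Corollary~\ref{discpair}.

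Next, let $\tilde\gamma=\tilde\gamma_1\cup\tilde\gamma_2$ be the $C^1$ geodesic pair in $(\D;e^{i\theta},-e^{-i\theta},0)$ from Corollary~\ref{discpair}, and set $\gamma=\varphi^{-1}(\tilde\gamma)$, $\gamma_j=\varphi^{-1}(\tilde\gamma_j)$. By the conformal invariance remark following Definition~\ref{gp}, $\gamma$ is a geodesic pair in $(D;a,b,\zeta)$. Conformality of $\varphi^{-1}$ at $0$ transports the continuous (horizontal) tangent of $\tilde\gamma$ at $0$ to a continuous tangent of $\gamma$ at $\zeta$. For the bisector statement, observe that $\gamma_{a,\zeta}$ and $\gamma_{\zeta,b}$ pull back to the radial segments $[0,e^{i\theta}]$ and $[0,-e^{-i\theta}]$, which are mirror images across the imaginary axis, so the imaginary and real axes are the two perpendicular angle bisectors at $0$. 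Since Corollary~\ref{discpair} asserts the tangent to $\tilde\gamma$ at $0$ is horizontal (hence an angle bisector), and since a conformal map carries angle bisection of tangent lines to angle bisection of tangent lines, the tangent to $\gamma$ at $\zeta$ bisects the angle between $\gamma_{a,\zeta}$ and $\gamma_{\zeta,b}$.

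For the endpoint angle statements I would invoke Lindel\"of's classical isogonality theorem: if $\partial D$ has an inner tangent at the prime end $a$, then $\varphi^{-1}$ preserves angles at $\varphi(a)\in\partial\D$ (see, e.g., \cite{GM}). When $a\ne b$ one has $\theta\in(-\pi/2,\pi/2)$ and Corollary~\ref{discpair} gives $\tilde\gamma\perp\partial\D$ at both $e^{i\theta}$ and $-e^{-i\theta}$, so isogonality yields $\gamma\perp\partial D$ at $a$ and at $b$. When $a=b$ one has $\theta=\pm\pi/2$, and Corollary~\ref{discpair} describes $\tilde\gamma$ as forming three equal angles of $\pi/3$ with $\partial\D$ at $\pm i$; isogonality transfers this to three angles of $\pi/3$ with $\partial D$ at $a$.

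The main obstacle I anticipate is the boundary behavior at $a$ and $b$: the conformal invariance of the geodesic pair condition and of the interior tangent at $\zeta$ follow immediately from interior conformality of $\varphi^{-1}$, but the endpoint angle claims require the nontrivial boundary-regularity fact that $\partial D$ having a tangent at a prime end implies isogonality of $\varphi$ there. This is precisely why the hypothesis on the existence of a tangent to $\partial D$ appears in the statement.
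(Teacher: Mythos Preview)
Your proposal is correct and follows essentially the same approach as the paper: transport the problem to $\D$ via a Riemann map sending $\zeta$ to $0$ and $a,b$ to $e^{i\theta},-e^{-i\theta}$, pull back the explicit geodesic pair of Corollary~\ref{discpair}, use interior conformality for the bisector claim, and invoke the boundary isogonality result from \cite{GM} (Theorem~II.4.2 there) for the endpoint angles. Your treatment is in fact slightly more explicit than the paper's, since you spell out the rotation formula and handle the $a=b$ case directly.
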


\begin{proof}
If $\varphi$ is a conformal map of
$D$ onto $\D$ with $\varphi(\zeta)=0$, then after rotating, we may suppose that
$\varphi(a)=\ee^{\ii\theta}$ and $\varphi(b)=-\ee^{-\ii\theta}$ with $-\pi/2 \le \theta \le \pi/2.$ Then
$\varphi(\gamma_{a,\zeta})$ lies on the diameter of $\D$ through $\ee^{\ii\theta}$, and 
$\varphi(\gamma_{b,\zeta})$ lies on the diameter of $\D$ through $-\ee^{-\ii\theta}$. The
geodesic pair $\sigma=\sigma_1\cup\sigma_2$ constructed in 
Corollary \ref{discpair} has a horizontal tangent
at $0$, which forms an angle $\theta$ with each of these diameters. Set
$\gamma_j=\varphi^{-1}(\sigma_j)$, for $j=1,2$, and note that $\varphi^{-1}$ preserves angles between curves through $0$ by conformality.
If $\partial D$ has a tangent at $a$ then $\varphi^{-1}$ preserves angles between curves in
$\overline \D$ meeting at $\ee^{\ii\theta}$, by Theorem II.4.2 in \cite{GM}. Thus $\gamma_1$ meets
$\partial D$ at right angles at $a$. A similar statement holds at $b$ and in the case where $a = b$.
\end{proof}

\begin{example} {\bf $C^1$ geodesic pairs on $\H$.}
For example, if $D=\H$ and $a=0$ and $b=\infty$ and $\zeta=r \ee^{\ii
t}$, $0  < t  < \pi$, then
\begin{equation}
\tau(z)=-\ii\ee^{-\ii t}\frac{z-\zeta}{z-\overline{\zeta}}
\end{equation}
is a conformal map of $\H$ onto $\D$ with $\tau(\zeta)=0$, $\tau(0)=-\ii\ee^{\ii t}$, and
$\tau(\infty)=-\ii\ee^{-\ii t}.$ Set $\theta=t-\pi/2$, $H=G_\theta \circ
\tau$, $\sigma_1=\tau^{-1}(\gamma_1)$ and $\sigma_2=\tau^{-1}(\gamma_2)$, where
$\gamma_1\cup\gamma_2$ is the constructed geodesic pair in $(\D;\ee^{\ii\theta},-\ee^{-\ii\theta},0)$.
Then $\sigma=\sigma_1\cup \sigma_2$ is a geodesic pair in $(\H;0,\infty,\zeta)$ and $H$ is
a conformal map of $\H\smallsetminus \sigma_1$ onto $\C \smallsetminus (-\infty,B]$ where
$B=\cos\theta+\theta \sin\theta$. The
corresponding map of $\H\smallsetminus \sigma_2$ onto the slit plane is
$\overline{G(-\overline{\tau(z)})}$. 
The hyperbolic geodesic in $\H$ from $0$ to $\zeta=r\ee^{\ii t}$ lies on  the circle through $0$ and $\zeta$ which is
perpendicular to $\R$. The hyperbolic geodesic in $\H$ from $\zeta$ to $\infty$ is the vertical half-line. 
Then by Theorem \ref{Dgeopair} the geodesic pair $\sigma$ has a tangent direction that bisects the angle 
between these hyperbolic
geodesics in $\H$. By elementary geometry, the tangent direction is $\ee^{\ii t}$, so that at
$\zeta=r\ee^{\ii t}$, $\sigma$ is
orthogonal to the circle of radius $r$ centered at $0$. This geodesic pair is also orthogonal to $\R$ at $0$
and has a vertical tangent at $\infty$. 
\end{example}

Not all geodesic pairs are $C^1$. The following example is a geodesic pair which does not have a tangent at the intersection point $\zeta$. 


\begin{example} {\bf A geodesic pair in $\wC\smallsetminus\{0\}$ with logarithmic spirals.}\label{nonsmooth}

Let $S$ be the strip 
$\{z: -\pi < \Im z < \pi\}$.  
The image of $\R$, by the conformal map 
$\ii \ee^{z/2}$ of $S$ onto $\H$, 
is the positive imaginary axis, so that $\R$ is a hyperbolic geodesic in $S$.
We consider another conformal image of $S$ found by first applying the map
$z \mapsto Az$ where $A$ is chosen so that $S$ is  rotated by an angle
$\theta$ then scaled so that the image by the map $z\mapsto Az$ 
of the two parallel lines in $\partial S$ differ by $2\pi \ii$. 
More concretely, fix $\theta$ with
$-\pi/2 < \theta < \pi/2$ and set $A=1/(1-\ii\tan\theta)$ then 
$A(\pi \tan \theta + \pi \ii)=\pi \ii$ and $A(-\pi \tan\theta -\pi \ii)=-\pi \ii$. 
The function $\ee^{Az}$ maps 
the line $\{x+\ii y: y=\pi\}\subset \partial S$ onto a
logarithmic spiral $S_\theta$ which spirals from $0$ to $\infty$ as
$x$ increases. The line
$\{x+\ii y:y=-\pi\}\subset \partial S$ 
is also mapped to
exactly the same spiral. 
The image of $\R$ is the spiral $-S_\theta$ from $0$ to $\infty$. 
Because
$\ee^{Az}$ is a conformal map on $S$, $-S_\theta$ is a hyperbolic
geodesic in $\C\smallsetminus S_\theta$. The map $\ee^{Az}$ maps the region 
$S+\pi \ii$ onto $\C \smallsetminus (-S_\theta)$. Similarly $S_\theta$ is a
hyperbolic geodesic in  $\C \smallsetminus -S_\theta$. If we reparameterize $-S_\theta$ so that it is a curve
from $\infty$ to $0$, then $\gamma=
S_\theta \cup (-S_\theta)$ is a geodesic pair in $(\wC\smallsetminus\{0\};0,0,\infty)$.

\end{example}

\section{Welding Maps}\label{sec:welding}
\label{weldingmaps}

In this section, we characterize piecewise geodesic curves using the welding maps.

Suppose \blue{$\gO\subsetneq \wC$} is a simply connected region contained in the extended plane $\wC$ and 
suppose $\gamma$ is a chord in $\gO$ from $a$ to $b$, with  $a,b\in\partial \Om$.
By the Jordan curve theorem, $\gO\smallsetminus \gamma$ consists of two simply
connected regions $\gO_+$ and $\gO_-$. We choose $\gO_+$ to be the region that lies to the left of the oriented curve $\gamma$, and choose $\gO_-$ to be the region that lies to the right of $\gamma$.
Let $f_+$ and $f_-$ be
conformal maps of $\gO_+$ and $\gO_-$ onto the upper half-plane $\H$
and the lower half-plane $\bL$, respectively. By Caratheodory's
theorem, $f_+$ and $f_-$ extend to be homeomorphisms of $\gO_+\cup \gamma$ and $\gO_-\cup \gamma$ onto $\H\cup f_+(\gamma)$
and $\bL\cup f_-(\gamma)$.
The function $\go=f_- \circ {f_+}^{-1}$ is called a {\bf welding map}
associated with the curve $\gamma$ with respect to the region $\gO$.
For $\zeta\in \gamma$, $x=f_+(\zeta)$ if and only if
$\go(x)=f_-(\zeta)$. So we can view  $f_+^{-1}$ and $f_-^{-1}$ as
mapping $\H$ and $\bL$ onto $\gO_+$ and $\gO_-$ pasting together, or welding,
the intervals $f_+(\gamma)\subset \partial \H$ and $f_-(\gamma)\subset
\partial \bL$, according to the prescription $\go$. 

The function $\go$ is
an increasing homeomorphism of $f_+(\gamma)$ onto $f_-(\gamma)$. 
Here, we extend the notion of an increasing homeomorphism on an interval in $\R$ to 
functions $u$ defined on an interval $J^*\subset \wR$, the extended
real line, where we
allow $\infty \in J^*$ and $\infty \in f(J^*)$. 
An increasing homeomorphism on $J^*$ is a continuous,
injective function $u$ defined on $J^*$ which is increasing on each subinterval of $J^*$ which contains
neither  $\infty$ nor $u^{-1}(\infty)$. Our choice for
$\gO_+$ and $\gO_-$ means that $f_+$ and $f_-$ are also increasing
functions of the parameterization of $\gamma$.

\bigskip

\begin{example}\label{2weldings}{\bf Welding maps for geodesic pairs in Corollary \ref{discpair}.}

Suppose $\gamma_1, \gamma_2 \subset \D$ are defined as in Corollary \ref{discpair}. Then $G_\theta$ provides a map of the two
regions in  
$\D \smallsetminus (\gamma_1\cup \gamma_2)$ onto $\H$ and $\bL$. See Figure~\ref{jog}. The
associated welding map $\go$ is 
\begin{equation}\label{eq:pairweld}
\go(x)=\begin{cases}
x & \text{for}~ x > B =  \cos\theta+\theta\sin\theta,\cr
x + 2\pi \sin \theta & \text{for}~ x < C = -\cos\theta-(\theta+\pi)\sin\theta.\cr
\end{cases}
\end{equation}
 The expression for $\go$ on $(B,\infty)$ follows from the fact that
$G_\theta$ is analytic across $\gamma_1$. 
The welding for $x \in (-\infty, C)$ is
given by first reflecting about the vertical line
$x=-(\pi/2)\sin\theta$
then reflecting about the vertical line $x=(\pi/2)\sin\theta$. Two 
reflections form a shift (and this is why we considered regions of
the form $U_\theta$). Indeed, the first reflection is given by
$z \mapsto -\overline{z}-\pi \sin \theta$, and the second reflection is given by
$z  \mapsto -\overline{z}+\pi \sin \theta$, so that 
$\go(x)=x+2\pi\sin \theta$, mapping the interval $(-\infty,C)$ onto $(-\infty, D)$.
\end{example}

\bigskip

\begin{example} \label{weldnonsmooth}{\bf Welding maps for Example \ref{nonsmooth}.}

This case was explored in \cite{KNS}, with slightly different notation. For completeness, we give
the construction here.
Recall that the function $f(z)=\ee^{A\log z}$, with $-\pi < \arg z < \pi$ is a conformal map of $\H$ and $\bL$ onto
the two regions in $\C\smallsetminus S_\theta\cup (-S_\theta)$, where $A = 1/(1- \ii\tan \theta)$. This yields $f_+(z)= f^{-1}(z)$ for $z\in f(\H)$ 
and $f_-(z)=f^{-1}(z)$ for $z\in f(\bL)$. The welding map $\go=f_-\circ f_+^{-1}$ then satisfies
\begin{equation}\label{spiralweld}
\go(x)=\begin{cases}
x & \text{for}~ x > 0,\cr
ax & \text{for}~ x < 0,\cr
\end{cases}
\end{equation}
where $a=\exp(-2\pi\tan\theta)$. Indeed, because $f$ is analytic on $\H\cup\bL\cup \R^+$,
$\go(x)=x$ for $x >0$.
If $x\in \partial \H$, $x<0$ and $a x \in \partial \bL$ , then $\log x =\log |x|+\ii\pi$ and 
$\log ax= \log |ax| -\ii \pi $. An easy computation shows that $f(ax)=f(x)$, so that $\go(x)=ax$
for $x < 0$. 
\end{example}

 \bigskip

Non-constant maps of the form
$\gs(z)=\dfrac{az+b}{cz+d}$, where $a,b,c,d$ are complex numbers, are
called {\bf \Mo{} maps}. Note that $\gs$ is an increasing
homeomorphism of $\wR$ onto $\wR$ if and only if  $\gs(\H)=\H$, and
this occurs if and only if $\gs(\bL)=\bL$.  

We can identify the \Mo{} maps that preserve $\H$ with the group
\begin{equation}
\PSL(2,\R) = \Big\{A = \begin{pmatrix}
a & b \\ c & d 
\end{pmatrix} :  a,b,c,d \in \R, \, ad - bc = 1\Big\}_{/A \sim -A}
\end{equation}
which acts on $\H$ (and $\bL$) by 
$A : z \mapsto \dfrac{az + b}{cz + d}$.
The set of \Mo{} maps of $\wC$  is similarly 
identified with the group $\PSL(2,\C)$.

If $\go$ is an increasing homeomorphism defined on an interval $J\subset \wR$ and if
 $\gs_1$, $\gs_2$ are \Mo{} maps with $\gs_j(\H) =\H$,  $j=1,2$, 
then $\go_1= \gs_1\circ \go\circ 
\gs_2$ is also an increasing homemorphism, defined on $\gs_2^{-1}(J)$.

\bigskip 

\begin{definition}\label{equiv}
We say that two increasing homeomorphisms $\go_1, \go_2$ defined on intervals in $\wR$ are {\bf equivalent}
if  $\go_1= \gs_1\circ \go_2\circ \gs_2$, for some \Mo{} maps with $\gs_j(\H) =\H$ (and hence
$\gs_j(\bL)=\bL$), $j=1,2$. 
\end{definition}

If $\go=f_- \circ {f_+}^{-1}$ is a welding map associated with a curve $\gamma$ with respect to a region $\gO$
then every increasing homeomorphism equivalent to $\go$ is also associated with the same curve $\gamma$ in the region $\gO$.
Indeed, if $g_+=\gs_2^{-1} \circ f_+$ and $g_-=\gs_1 \circ f_-$, then
$g_-\circ g_+^{-1} = \gs_1 \circ \go \circ \gs_2.$ By the Riemann mapping
theorem, every welding associated with the curve $\gamma$ is
equivalent to $\om$. We remark that this notion of equivalence depends on
the orientation of the curve $\gamma$. If $\go$ is a welding map
associated to $\gamma\subset \gO$, then $-\go^{-1}(-x)=g_-\circ g_+^{-1}$, where $g_+=-f_-$ and
$g_-=-f_+$, is also an
increasing homeomorphism, not equivalent to $\go$, but 
associated with $\gamma$ traced
in the opposite direction.

\bigskip

\begin{lemma}\label{hypgeo} Suppose \blue{$\gO\subsetneq \C$}  is a simply connected region and suppose $\gamma\subset \gO$  is a chord in $\gO$ connecting $a \neq b \in \partial \gO$. 
Let $\go=f_- \circ {f_+}^{-1}$ be a welding map associated with $\gamma$ with respect to $\gO$. 
Then $\gamma$ is a
hyperbolic geodesic in $\gO$ 
if and only if  $\go$ is a \Mo{} map on its interval of definition.
\end{lemma}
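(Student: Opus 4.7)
The plan is to prove both implications by explicit construction, using the observation (Definition~\ref{equiv} together with the Riemann mapping theorem) that any two welding maps associated to $(\gamma,\Omega)$ are equivalent, so the property of being \Mo{} depends only on $(\gamma,\Omega)$ and not on the particular choice of $f_\pm$.

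For the forward direction, suppose $\gamma$ is a hyperbolic geodesic in $\Omega$. By definition, there exists a conformal $f:\H\to\Omega$ with $f(i\R^+)=\gamma$, $f(0)=a$, $f(\infty)=b$. The orientation convention of the lemma identifies $\Omega_+$ with $f(Q_-)$ and $\Omega_-$ with $f(Q_+)$, where $Q_\pm=\{w\in\H:\pm\Re w>0\}$ are the two quadrants of $\H$. The key observation is that $\psi(w)=-w^2$ conformally sends $Q_-$ onto $\H$ and $Q_+$ onto $\bL$, and in both cases sends the common boundary $i\R^+$ onto $\R^+$ via $it\mapsto t^2$. Setting $f_\pm=\psi\circ f^{-1}|_{\Omega_\pm}$ therefore produces conformal maps $\Omega_\pm\to\H,\bL$ whose continuous boundary extensions to $\gamma$ coincide, so the associated welding is the identity on $(0,\infty)$ and is trivially \Mo{}.

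For the converse, assume the welding $\omega$ equals some $\sigma\in\PSL(2,\R)$ on $I=f_+(\gamma)$. The plan is to normalize by replacing $f_-$ with $\sigma^{-1}\circ f_-$ (still a conformal map $\Omega_-\to\bL$, since $\sigma^{-1}$ preserves $\bL$), after which the welding is the identity and the continuous boundary values of $f_+$ and $f_-$ on $\gamma$ agree. These may then be glued into a single map $F:\Omega\to\wC$ with $F|_{\Omega_\pm}=f_\pm$. Then $F$ is continuous on $\Omega$, holomorphic on $\Omega\smallsetminus\gamma$, and injective (the two halves land in the disjoint halfplanes $\H,\bL$, while $F|_\gamma$ is an injection onto $I\subset\wR$). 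By the standard removability theorem for continuous functions holomorphic off a Jordan arc of zero area, $F$ extends to a conformal map of $\Omega$ onto $F(\Omega)=\H\cup I\cup\bL=\wC\smallsetminus J$, where $J$ is the complementary closed arc to $I$ in $\wR$. To finish, it suffices to show that $I$ is a hyperbolic geodesic in $\wC\smallsetminus J$, for then $\gamma=F^{-1}(I)$ is one in $\Omega$ by conformal invariance. After a real-\Mo{} normalization sending $J$ to $\{x\geq 0\}\cup\{\infty\}$, the complementary domain becomes the slit plane $\C\smallsetminus[0,\infty)$, and $z\mapsto z^2$ is a conformal isomorphism $\H\to\C\smallsetminus[0,\infty)$ carrying $i\R^+$ exactly onto $(-\infty,0)$, the image of $I$.

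The main obstacle I anticipate is the removability step in the converse: upgrading the glued $F$ from ``continuous on $\Omega$, holomorphic off $\gamma$'' to ``holomorphic on $\Omega$''. Since $\gamma$ has zero two-dimensional Lebesgue measure and $F$ is locally bounded, this follows from the standard Painlev\'e removability theorem; it is nevertheless the step that turns the gluing into a bona fide conformal map and enables the subsequent identification of $\gamma$ as a geodesic.
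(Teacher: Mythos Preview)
Your forward direction is correct and is essentially the paper's argument, with $-w^{2}$ in place of $w^{2}$.

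The converse has a genuine gap in the removability step. You glue $f_{+}$ and $f_{-}$ to a map $F$ on $\Omega$ and then invoke ``Painlev\'e removability'' for $\gamma$, asserting that $\gamma$ has zero area. Neither part of this is justified: a chord in the paper's sense is merely a continuous injective image of an interval, so $\gamma$ can have positive two--dimensional Lebesgue measure (Osgood curves), and in any case Painlev\'e's theorem concerns sets of zero \emph{length}, not zero area. There is no reason a priori why an arbitrary Jordan arc $\gamma$ should be removable for continuous (or bounded) holomorphic functions.

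The fix is to glue on the other side, which is exactly what the paper does. After your normalization the boundary values of $f_{+}^{-1}$ and $f_{-}^{-1}$ agree on the interval $I\subset\R$, so define $h=f_{+}^{-1}$ on $\H$ and $h=f_{-}^{-1}$ on $\bL$; this $h$ is continuous on $\H\cup I\cup\bL$ and holomorphic off the \emph{line segment} $I$, which is trivially removable by Morera. Then $h$ is conformal onto $\Omega$, and precomposing with a square root exhibits $\gamma$ as the image of $i\R^{+}$. Equivalently, in your language: your $F$ is a homeomorphism by invariance of domain, and its inverse $F^{-1}$ is continuous and holomorphic off $I$, hence holomorphic by Morera, so $F$ is conformal after all. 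Either way, the point is that the removability burden must be placed on the interval $I$, not on the wild arc $\gamma$.
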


\begin{proof}  If $\gamma$ is a hyperbolic geodesic from $a$ to $b$ with $a,b\in \partial \gO$, then let
$g$ be a conformal map of $\gO$ onto $\H$ such that
$g(\gamma)=\ii\R^+$, so that $g(a)=+\infty$ and $g(b)=0$. Then $g^2\vert_{\gO^+}$ and  $g^2\vert_{\gO^-}$ are
conformal maps of the two regions $\gO_+$ and $\gO_-$, with
$\gO\smallsetminus \gamma=\gO_+\cup\gO_-$, onto $\H$ and $\bL$ respectively.
The welding map $\go_0$ on $(-\infty,0)$, in this case, is the identity map. 
But $\go=\gs_1\circ \go_0\circ \gs_2$ for some \Mo{} maps $\gs_1, \gs_2$ by the Riemann mapping theorem.
Thus $\go=\gs_1\circ \gs_2$, a \Mo{} map, on $\gs_2^{-1}(-\infty,0)$.

Conversely 
suppose
$\go$ is a \Mo{} map on its interval of definition $I$.
Let $\gs_1$ be a \Mo{} map with $\gs_1(\go(I))=(-\infty,0)$ and let $\gs_2$ be a
\Mo{} map with $\gs_2(I)=(-\infty,0)$. We can also arrange that
$\gs_j(\H)=\H$ and $\gs_j(\bL)=\bL$, $j=1,2$. 
Then if $c$ is a positive constant,
$\go_1=c\gs_1\circ \go\circ \gs_2^{-1}$ fixes $0$ and $\infty$, so we can choose $c$ so that $\go_1$
is the identity map on
$(-\infty,0)$ . Set $g_+=\sigma_2\circ f_+$ and $g_-=c\sigma_1\circ
f_-$. Then $\go_1=g_- \circ {g_+}^{-1}$ is a welding map associated
with $\gamma$. Moreover $g_+^{-1}=g_-^{-1}$ on $(-\infty,0)$ so that the function $h$ defined by 
$h=g_+^{-1}$ on $\H$ and $h= g_-^{-1}$ on $\bL$ extends to be conformal on $\H \cup \bL
\cup(-\infty,0)$.
Then $k(z)=h(z^2)$ is a conformal map of $\H$ onto $\gO$ with $k(\ii\R^+)=\gamma$.
\end{proof}

\bigskip

\begin{cor}\label{weldpairs} A welding map for a geodesic pair in $(D;a,b,\zeta)$,  \blue{with $D \subsetneq \wC$,}
is equivalent to a
welding map given by 
\begin{equation}\label{pwMo}\go(x) =
\begin{cases}
a_1 x + b_1, &\text{for}~ x > c_1\cr
a_2 x +b _2, &\text{for}~ x < c_2\cr
\end{cases}
\end{equation}
where $a_1 > 0$, $a_2 > 0$, 
$b_1, b_2, c_1, c_2 \in \R$, and $c_2\le c_1$, and $a_2c_2+b_2 \le a_1c_1+b_1$.
\end{cor}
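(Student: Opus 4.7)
The plan is to combine Lemma \ref{hypgeo} with a careful normalization of the welding via its equivalence class.

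Let $f_+\colon \gO_+\to \H$ and $f_-\colon \gO_-\to \bL$ be conformal maps, so that $\go=f_-\circ f_+^{-1}$ is a welding for $\gamma=\gamma_1\cup \gamma_2$ in $D$. Since $\gamma_2$ is an injective arc joining $\zeta\in D$ to $b\in \partial D$, the slit domain $D\smallsetminus \gamma_2$ is simply connected, $\gamma_1$ is a chord in it from $a$ to $\zeta$, and removing $\gamma_1$ from $D\smallsetminus \gamma_2$ produces precisely the two regions $\gO_+$ and $\gO_-$. Hence the restrictions of $f_\pm$ to $\gO_\pm$ also realize a welding of $\gamma_1$ with respect to $D\smallsetminus \gamma_2$; and since $\gamma_1$ is a hyperbolic geodesic there, Lemma \ref{hypgeo} implies that $\go$ restricted to $f_+(\gamma_1)$ extends to a \Mo{} map of $\wC$. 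Swapping the roles of $\gamma_1$ and $\gamma_2$, the same conclusion applies to $\go|_{f_+(\gamma_2)}$.

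I would next exploit the equivalence $\go\sim \gs_1\circ\go\circ\gs_2$ to move $\zeta$ to $\infty$ on both sides. Choose $\gs_2\in \PSL(2,\R)$ with $\gs_2(\infty)=f_+(\zeta)$ and $\gs_1\in \PSL(2,\R)$ with $\gs_1(f_-(\zeta))=\infty$, and set $\tilde\go=\gs_1\circ\go\circ\gs_2$, with associated maps $\tilde f_+=\gs_2^{-1}\circ f_+$ and $\tilde f_-=\gs_1\circ f_-$. Then $\tilde f_+(\zeta)=\tilde f_-(\zeta)=\infty$, and the two intervals $\tilde f_+(\gamma_j)\subset \wR$ each have $\infty$ as an endpoint. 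On each of these intervals, $\tilde \go$ is a \Mo{} map (the composition of $\gs_1,\gs_2$ with the \Mo{} extension from the previous paragraph) that fixes $\infty$, so it is affine: $\tilde \go(x)=a_j x+b_j$. The constants $a_1,a_2$ are positive because $\tilde\go$ is an increasing homeomorphism.

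Since $\zeta$ is interior to the arc $\gamma$ in $\partial \gO_+$ and $\tilde f_+(\zeta)=\infty$, the images $\tilde f_+(\gamma_1)$ and $\tilde f_+(\gamma_2)$ are the two components of a punctured neighborhood of $\infty$ in the arc $\tilde f_+(\gamma)\subset \wR$. With the convention that $f_+$ is increasing along the parametrization of $\gamma$, these take the forms $(c_1,\infty)$ and $(-\infty,c_2)$ respectively, and disjointness forces $c_2\le c_1$. The analogous analysis on the $\bL$-side yields $\tilde f_-(\gamma_1)=(a_1c_1+b_1,\infty)$ and $\tilde f_-(\gamma_2)=(-\infty,a_2c_2+b_2)$, and their disjointness gives $a_2c_2+b_2\le a_1c_1+b_1$. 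The main delicacy in the argument is tracking orientations carefully --- verifying that $\gamma_1$ really produces the ray to $+\infty$ rather than to $-\infty$, and similarly for $\gamma_2$ --- but this is pinned down by the choice of $\gO_+$ as the region lying to the left of $\gamma$ together with the agreement between the orientations of $\gamma$ and of $\partial \H$ as seen from $\H$.
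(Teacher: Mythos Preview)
Your proof is correct and follows essentially the same approach as the paper: apply Lemma~\ref{hypgeo} to each of the two geodesic arcs to see that the welding is \Mo{} on the two adjacent intervals, then normalize via the equivalence relation so that $\zeta$ corresponds to $\infty$ on both sides, forcing each \Mo{} piece to fix $\infty$ and hence be affine. The paper's argument is simply a terser version of yours; your additional justification of the inequalities $c_2\le c_1$ and $a_2c_2+b_2\le a_1c_1+b_1$ via disjointness of the image intervals, and your tracking of orientations, merely fill in details the paper leaves implicit.
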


\begin{proof}

By Lemma \ref{hypgeo}, 
the welding map $\go$ for a geodesic pair in $(D;a,b,\zeta)$
is given by \Mo{} maps on two adjacent intervals. By pre- and post-
composition with increasing \Mo{} maps, we may suppose that
the common point of these two intervals in $\wR$ is $\infty$, and
$\go(\infty)=\infty$. A \Mo{}  map which fixes $\infty$ and
is increasing as a map $\R$ to $\R$ must be of the form $ax+b$ with
$a>0$ and $b\in \R$. The lemma follows.
\end{proof}

\bigskip

If an increasing homeomorphism 
is of the form (\ref{pwMo}), we say that it is piecewise
\Mo{} near $\infty$. If it also 
satisfies $a_1=a_2$ then we say that $\go$ has a
continuous derivative equal to $a_1$ at $\infty$.

\bigskip

\begin{thm}\label{uniqconstr}
If $\go$ is a piecewise \Mo{} map near $\infty$ which 
has a continuous derivative at $\infty$ 
then $\go$ is a welding
map associated with exactly one of the geodesic pairs constructed in 
Corollary \ref{discpair}, unless $\go$ is a linear map on all of $\R$.
\end{thm}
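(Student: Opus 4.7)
The plan is to normalize $\go$ with pre/post affine Möbius transformations, extract a single scalar invariant of the resulting equivalence, and show that this invariant parameterizes the family $\{\go_\theta\}_{\theta \in [-\pi/2, \pi/2]}$ from Example \ref{2weldings} bijectively.

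\textbf{Normalization.} Write $\go(x) = ax + b_1$ for $x > c_1$ and $\go(x) = ax + b_2$ for $x < c_2$ (continuous derivative forces a common slope $a > 0$). Post-composing with $y \mapsto (y - b_1)/a$, an affine \Mo{} map preserving $\H$, yields an equivalent welding
\[
\tilde{\go}(x) = x \text{ for } x > c_1, \qquad \tilde{\go}(x) = x + \mu \text{ for } x < c_2,
\]
where $\mu = (b_2 - b_1)/a$, with constraints $c_2 \le c_1$ and $c_2 + \mu \le c_1$ inherited from Corollary \ref{weldpairs}. The residual equivalence preserving the ``break at $\infty$'' structure is two-parameter: pre-composition by $x \mapsto \lambda x + \tau$ ($\lambda > 0$), compensated by a post-composition that restores the normalization, acting as $(c_1, c_2, \mu) \mapsto ((c_1 - \tau)/\lambda, (c_2 - \tau)/\lambda, \mu/\lambda)$. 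The ratio
\[
\eta := \frac{\mu}{c_1 - c_2}
\]
(set to $-\infty$ when $c_1 = c_2$, forcing $\mu < 0$) is therefore a complete invariant, and $\eta \in [-\infty, 1]$ by the constraints. The excluded linear case is precisely $c_1 = c_2$ with $\mu = 0$.

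\textbf{The invariant $\eta(\theta)$ is a strict bijection.} Substituting the values from Example \ref{2weldings} ($c_1 = B(\theta) = \cos\theta + \theta\sin\theta$, $c_2 = C(\theta) = -B(\theta) - \pi \sin\theta$, $\mu = 2\pi\sin\theta$) yields
\[
\eta(\theta) = \frac{\pi \sin\theta}{D(\theta)}, \qquad D(\theta) := \cos\theta + (\theta + \tfrac{\pi}{2}) \sin\theta.
\]
Differentiating and using $D'(\theta) = (\theta + \pi/2)\cos\theta$, the cross terms cancel fortuitously and one obtains
\[
\eta'(\theta) = \frac{\pi \cos^2\theta}{D(\theta)^2},
\]
so $\eta$ is strictly increasing on $[-\pi/2, \pi/2]$. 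Direct evaluation gives $\eta(\pi/2) = \pi/(0 + \pi\cdot 1) = 1$, and a short Taylor expansion at $\theta = -\pi/2$ (where $D$ vanishes to order three while the numerator tends to $-\pi$) shows $\eta(\theta) \to -\infty$. Hence $\eta : [-\pi/2, \pi/2] \to [-\infty, 1]$ is a strictly increasing bijection.

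\textbf{Conclusion.} For a non-linear $\go$ the invariant $\eta_\go$ lies in $[-\infty, 1]$, so there is a unique $\theta$ with $\eta(\theta) = \eta_\go$. The explicit matching is realized by the affine maps $\lambda = (c_1 - c_2)/(B(\theta) - C(\theta))$, $\tau = c_1 - \lambda B(\theta)$ (with the obvious modification in the degenerate case $c_1 = c_2$), which exhibit $\go \sim \go_\theta$. Since equivalent welding maps describe the same curve up to \Mo{} change of the conformal parameterizations (Section \ref{weldingmaps}), $\go$ is a welding map associated with the constructed geodesic pair of Corollary \ref{discpair} for this unique $\theta$. The main obstacle in the argument is the derivative computation; the unexpected simplification $\eta' = \pi \cos^2\theta / D^2$ is what drives both monotonicity and the bijection onto the full invariant range.
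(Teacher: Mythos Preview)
Your proof is correct and follows the same strategy as the paper: normalize via affine maps, extract a scalar invariant of the equivalence class, and show that the family $\{\go_\theta\}$ hits every value of the invariant exactly once. The paper does this by splitting into three cases according to $\operatorname{sgn}(b_2-b_1)$ and tracking the left break point after normalizing the shift to $\pm1$ or $0$; your single ratio $\eta=\mu/(c_1-c_2)$ and the identity $\eta'(\theta)=\pi\cos^2\theta/D(\theta)^2$ package the same computation more cleanly and avoid the case split. One step you leave implicit deserves a sentence: when you say ``the residual equivalence preserving the break-at-$\infty$ structure is two-parameter,'' you need that \emph{every} M\"obius equivalence between two weldings already in break-at-$\infty$ form is affine. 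This holds because the break point is the unique point of $\wR$ at which $\go$ fails to extend analytically, so any $\sigma_2$ in an equivalence $\go_1=\sigma_1\circ\go_2\circ\sigma_2$ must send $\infty$ to $\infty$; the same applies to $\sigma_1$. The paper also records an alternative uniqueness argument you might note: if $\go_\alpha\sim\go_\beta$, then by Lemma~\ref{global} there is a conformal self-map of $\D$ carrying one geodesic pair to the other, necessarily a rotation fixing $0$, which forces $\alpha=\beta$.
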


\bigskip

\begin{proof}
Suppose $\go(x)=ax+b_1$, for $x > c_1$, and $\go(x)=ax+b_2$ for $
x < c_2$, where $c_2 \le c_1$ and $a > 0$.

If $b_1\ne b_2$ then
set
$$\varphi(z)=\left(\frac{|b_1-b_2|}{a}\right)z +c_1$$
and 
$$\tau(z)=\frac{z -(ac_1+b_1)}{|b_1-b_2|}.$$
It is elementary to check the following:
If $b_2 > b_1$ then
\begin{equation}\label{p1shift}
\go_1(x)\equiv\tau\circ\go\circ\varphi(x)=
\begin{cases}
x,&\text{for}~ x > 0\cr
x+1&\text{for}~ x < \dfrac{(c_2-c_1)a}{b_2-b_1} \le -1.\cr
\end{cases}
\end{equation}
The last inequality in (\ref{p1shift})
holds because $\go(c_2)\le \go(c_1)$.

If $b_1 > b_2$ then
\begin{equation}\label{m1shift}
\go_1(x)\equiv\tau\circ\go\circ\varphi(x)=
\begin{cases}
x,&\text{for}~ x > 0\cr
x-1&\text{for}~ x < \dfrac{(c_2-c_1)a}{b_1-b_2} \le 0.
\end{cases}
\end{equation}

If $b_1=b_2$ and $c_2 < c_1$ then set 
$$\varphi(z)= \frac{c_1-c_2}{2}x +\frac{c_1+c_2}{2}$$
and
$$\tau(z)=\frac{2x}{a(c_1-c_2)}-\frac{c_1+c_2+2b_1/a}{c_1-c_2}.$$
Then 
\begin{equation}\label{noshift}
\go_1(x)\equiv\tau\circ\go\circ\varphi (x) =x ~\text{for}~ x > 1 ~\text{and}~ 
x< -1.
\end{equation}

If $b_1=b_2$ and $c_1=c_2$ and if $\go$ is a welding for $\D\smallsetminus \gamma= D_+\cup D_-$
then $f_+^{-1}(x)=f_-^{-1}(ax+b_1)$ for $x \ne c_1$. This implies that the function $F$ given by
$f_+^{-1}$ on $\H$ and $f_-^{-1}(ax+b_1)$ on $\bL$ extends to be an entire function mapping onto the disk. 
But then $F$ is constant, which is a contradiction. 
We remark that these linear maps on all of $\R$ are welding maps for circles and lines in $\wC$, but not for geodesic pairs \blue{in $\D$}.

Apply the above normalization to the welding maps in Example \ref{2weldings}. 
So  $a=1$, $b_1=0$,
$b_2=2\pi\sin\theta$
$c_1=\cos\theta +\theta\sin\theta$ and
$c_2=-\cos\theta-(\theta+\pi)\sin\theta$.

If $\sin \theta>0$ then (\ref{p1shift}) applies with 
$\dfrac{(c_2-c_1)a}{b_2-b_1} =
-\left(\dfrac{\cot\theta+\theta}{\pi}\right)-\dfrac {1}{2}$.

If $\sin \theta<0$ then (\ref{m1shift}) applies with 
$\dfrac{(c_2-c_1)a}{b_1-b_2} =
\left(\dfrac{\cot\theta+\theta}{\pi}\right)+\dfrac {1}{2}$.

If $\sin \theta=0$, then $\theta=0$ and (\ref{noshift}) applies.

Note that  $-(\cot\theta+\theta)/\pi-1/2$
has positive derivatives and increases from 
$-\infty$ to $-1$ as $\theta$ increases from $0$ to $\pi/2$. 
Similarly $(\cot\theta+\theta)/\pi+1/2$
decreases from $0$ to $-\infty$ as $\theta$ increases from $-\pi/2$ to $0$. 
Thus the examples in 
Corollary \ref{discpair} have
a welding map in each equivalence class of piecewise \Mo{}
 maps with continuous derivative at $\infty$. 

Suppose $\gamma_\alpha=\gamma_{\alpha,1}\cup \gamma_{\alpha,2}$ and 
$\gamma_\beta=\gamma_{\beta,1}\cup\gamma_{\beta,2}$ are geodesic pairs created in 
Corollary \ref{discpair} for $\theta=\alpha$ and $\theta=\beta$. If their welding maps are equivalent, then there is a conformal map $\varphi$ of $\D$ onto $\D$  with $\varphi(\gamma_\alpha)=\gamma_\beta$, by the comment after Definition \ref{equiv}. But then $\varphi$ 
must be a \Mo{} map with $\varphi(0)=0$, and hence $\varphi$ is a rotation. But if $\gamma_\beta$ is a rotation of $\gamma_\alpha$, then $\alpha=\beta$. This proves that there is exactly one welding map from the examples in Corollary \ref{discpair}
in each equivalence class of piecewise \Mo{} 
maps with continuous derivative at $\infty$. This latter fact can also be proved by showing that no two maps of the form (\ref{p1shift}), (\ref{m1shift}), or (\ref{noshift}) are equivalent.
\end{proof}

\bigskip

If $\gvp$ is a conformal map of $\gO$ onto another simply connected region $\gvp(\gO)$ 
and if $\go$ is a welding map associated with $\gamma\subset\gO$,
then $\go$ is also a welding map associated with $\gvp(\gamma)\subset \gvp(\gO)$.
The next lemma, which is well-known, says that a piecewise analytic Jordan curve is determined by 
its welding map, up to conformal
equivalence.  This lemma and the subsequent corollary are also true 
under the less restrictive assumption that
$\gamma_1$ is a quasi-arc. 
Because the audience for this paper may come from other fields, we decided
to keep the proofs as simple as possible.

\bigskip

\begin{lemma}\label{global}
Suppose $\go$ is a welding map for both chords $\gamma_1 \subset \gO_1$ and
$\gamma_2\subset \gO_2$.
If $\gamma_1$ is piecewise analytic, then there is a conformal map $F$ of $\gO_1$  onto $\gO_2$ such that
$F(\gamma_1)=\gamma_2$.
\end{lemma}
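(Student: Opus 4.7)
The plan is to construct $F$ by pasting together the conformal maps produced by the two weldings, and then to use the piecewise analyticity of $\gamma_1$ to promote the continuous map across $\gamma_1$ to a conformal one.

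First, I would normalize. Let $f_+^{(i)}, f_-^{(i)}$ be the conformal maps onto $\H, \bL$ used to define the welding of $\gamma_i$ in $\Omega_i$. Since both $f_-^{(i)} \circ (f_+^{(i)})^{-1}$ represent the same welding $\go$ in the sense of Definition \ref{equiv}, we may pre- and post-compose the $\Omega_2$ maps with \Mo{} automorphisms of $\H$ and $\bL$ so that
\[
f_-^{(1)} \circ (f_+^{(1)})^{-1} \;=\; f_-^{(2)} \circ (f_+^{(2)})^{-1}
\]
holds as equalities of homeomorphisms on the common interval of definition. Carath\'eodory's theorem guarantees that each $f_\pm^{(i)}$ extends as a homeomorphism on $\Omega_i^\pm \cup \gamma_i$, interpreting boundary points as prime ends.

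Next, define
\[
F \;=\; (f_+^{(2)})^{-1} \circ f_+^{(1)} \quad\text{on } \Omega_1^+, \qquad F \;=\; (f_-^{(2)})^{-1} \circ f_-^{(1)} \quad\text{on } \Omega_1^-.
\]
Each piece is a conformal bijection onto $\Omega_2^\pm$. The welding identity above says precisely that the two definitions agree as continuous functions on $\gamma_1$, so $F\colon \Omega_1 \to \Omega_2$ is a well-defined continuous bijection that is conformal off $\gamma_1$ and satisfies $F(\gamma_1) = \gamma_2$.

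The main point, and the only place where the piecewise analyticity hypothesis enters, is to verify that $F$ is holomorphic across $\gamma_1$. On any open analytic subarc of $\gamma_1$, I would choose a local analytic parametrization $\alpha\colon (-\eps,\eps) \to \gamma_1$; by the definition of a real-analytic arc, $\alpha$ extends to a conformal map of a complex neighborhood $V \supset (-\eps,\eps)$ onto a neighborhood $U$ of the arc piece. Then $F \circ \alpha$ is continuous on $V$ and holomorphic on $V \cap \H$ and on $V \cap \bL$. Applying Morera's theorem (equivalently, the removability of the real segment for a continuous function holomorphic off it, as in Schwarz reflection with a continuous seam) shows that $F \circ \alpha$ is holomorphic on $V$, hence $F$ is holomorphic on $U$. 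This handles $F$ away from the finitely many corners (endpoints of the analytic pieces), where $F$ is holomorphic in a punctured neighborhood and continuous, so the isolated singularity is removable by Riemann's removable singularity theorem.

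The main obstacle I anticipate is ensuring that the two candidate extensions really do agree on $\gamma_1$ as functions of the prime ends and extend continuously; but once the welding identity is invoked and Carath\'eodory's theorem is available, this is automatic, and the analytic continuation step above then finishes the proof. Finally, since $F$ is now a holomorphic injection of $\Omega_1$ onto $\Omega_2$ (injectivity follows from injectivity on each $\Omega_1^\pm$ together with continuity and the fact that $\gamma_1$ maps bijectively onto $\gamma_2$), $F$ is the desired conformal equivalence.
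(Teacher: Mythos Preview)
Your proposal is correct and follows essentially the same route as the paper: define $F$ piecewise via $(f_\pm^{(2)})^{-1}\circ f_\pm^{(1)}$, use the welding identity to see the two pieces agree on $\gamma_1$, then invoke piecewise analyticity of $\gamma_1$ together with removability at the finitely many corners to conclude $F$ is conformal on all of $\Omega_1$. The only difference is one of detail: you spell out the analytic-continuation step across an analytic arc via a local straightening and Morera's theorem, whereas the paper simply asserts that step in one line.
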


\begin{proof} Let $f_+, f_-$ be the conformal maps onto $\H$ and $\bL$ of the two regions determined by
$\gamma_1$, and let 
 $g_+, g_-$ be the conformal maps onto $\H$ and $\bL$ of the two regions determined by $\gamma_2$.
The functions 
$F_1={g_+}^{-1} \circ f_+$ and $F_2={g_-}^{-1} \circ f_-$ are conformal maps of the regions determined 
by $\gamma_1$ onto the regions determined by $\gamma_2$.  By assumption $f_-\circ {f_+}^{-1}=\go=g_-\circ {g_+}^{-1}$ 
on $f_+(\gamma_1)$, so that $F_1=F_2$ on $\gamma_1$. Because $\gamma_1$ is piecewise analytic, the
function $F_1$ extends to be analytic on $\gO_1$ except for the isolated points where $\gamma_1$ is
not analytic. Isolated
points are removable for continuous analytic functions, so 
the function $F_1$ extends to
be a conformal map of $\gO_1$ onto $\gO_2$.
\end{proof}

It is not true that a welding $\go$ for a curve $\gamma$ with respect to a region
$\gO$ is also a welding with respect to a subregion $\gO_1\subset \gO$ for $\gamma\cap
\gO_1$, but we can cut $\gO$ along $\gamma$ to give the following local result.

Suppose $\go_1, \go_2$ are weldings for piecewise analytic chords $\gamma_1, \gamma_2$ with respect to regions
$\gO_1, \gO_2$, respectively, and suppose $f_+, f_-$ and $g_+, g_-$ are
conformal maps of the corresponding regions $(\gO_j)_+$ and $(\gO_j)_-$ onto
$\H$ and $\bL$.
 If $\go_1\vert_I=\go_2\vert_I$ for some interval
$I \subset f_+(\gamma_1)\cap g_+(\gamma_2)$, then $\go_1\vert_I$ is a welding map associated
with $(\gO_1\smallsetminus \gamma_1)\cup f_+^{-1}(I)$. Similarly,
$\go_2\vert_I$ is a welding map associated 
with $(\gO_2\smallsetminus \gamma_2)\cup g_+^{-1}(I)$. 
Lemma \ref{global} implies the following result.

\begin{cor}\label{local} 
 If $\go_1\vert_I=\go_2\vert_I$ for some interval
$I \subset f_+(\gamma_1)\cap g_+(\gamma_2)$, then there is a conformal map from $(\gO_1\smallsetminus \gamma_1)\cup f_+^{-1}(I)$ onto
 $(\gO_2\smallsetminus \gamma_2)\cup g_+^{-1}(I)$ mapping $f_+^{-1}(I) \subset \gamma_1$
onto $g_+^{-1}(I)\subset \gamma_2$.
\end{cor}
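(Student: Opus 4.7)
The plan is to reduce the corollary to a direct application of Lemma \ref{global} on the two slit regions $\widetilde{\gO}_1 := (\gO_1 \smallsetminus \gamma_1) \cup f_+^{-1}(I)$ and $\widetilde{\gO}_2 := (\gO_2 \smallsetminus \gamma_2) \cup g_+^{-1}(I)$ introduced in the paragraph preceding the statement. The strategy rests on the observation that after cutting $\gO_j$ open along the complement of the subarc, the subarc itself becomes a new chord in the resulting simply connected region, and its welding map is precisely the restriction $\go_j\vert_I$.

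First, I would invoke the discussion immediately above the corollary: the restrictions of $f_+, f_-$ (respectively $g_+, g_-$) remain conformal maps of the two sides of $\widetilde{\gO}_1$ (respectively $\widetilde{\gO}_2$) onto $\H$ and $\bL$, so the welding map of the chord $f_+^{-1}(I)$ in $\widetilde{\gO}_1$ is exactly $\go_1\vert_I$, and the welding map of $g_+^{-1}(I)$ in $\widetilde{\gO}_2$ is exactly $\go_2\vert_I$. By hypothesis these two welding maps coincide, so the two chords $f_+^{-1}(I) \subset \widetilde{\gO}_1$ and $g_+^{-1}(I) \subset \widetilde{\gO}_2$ share a common welding.

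Second, since $\gamma_1$ is piecewise analytic, its open subarc $f_+^{-1}(I)$ is piecewise analytic as well. Lemma \ref{global} then applies to the pair $(\widetilde{\gO}_1, f_+^{-1}(I))$ and $(\widetilde{\gO}_2, g_+^{-1}(I))$, producing a conformal map $F \colon \widetilde{\gO}_1 \to \widetilde{\gO}_2$ sending $f_+^{-1}(I)$ onto $g_+^{-1}(I)$, which is exactly the stated conclusion.

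The main obstacle — and really the only substantive point beyond citing Lemma \ref{global} — is the implicit verification that $\widetilde{\gO}_j$ is genuinely a simply connected open subregion of $\wC$ containing $f_+^{-1}(I)$ (respectively $g_+^{-1}(I)$) as a legitimate chord. This is taken care of in the paragraph preceding the statement; to reassure oneself, one observes that $\widetilde{\gO}_j$ is the complement in $\gO_j$ of the closed subset $\gamma_j \smallsetminus f_+^{-1}(I)$, hence open; connectedness is immediate because the subarc joins the two sides of $\gO_j \smallsetminus \gamma_j$; and simple connectedness in $\wC$ follows because the complement $\wC \smallsetminus \widetilde{\gO}_j$ is the union of the connected set $\wC \smallsetminus \gO_j$ with at most two closed subarcs of $\gamma_j$ each attaching to it, hence itself connected. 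Once this setup is granted, the corollary is immediate from Lemma \ref{global}.
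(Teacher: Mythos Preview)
Your proposal is correct and follows essentially the same approach as the paper: the paragraph preceding the corollary already records that $\go_j\vert_I$ is the welding map of the subarc in the slit region $\widetilde{\gO}_j$, and the paper then simply says ``Lemma \ref{global} implies the following result.'' Your write-up is slightly more explicit about checking the piecewise analyticity of the subarc and the simple connectedness of $\widetilde{\gO}_j$, but the argument is the same.
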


In other words, the local behavior of $\gamma_1$ is, up to local conformal equivalence, 
determined by the local behavior of $\go_1$, as was observed in \cite{KNS}.

A Jordan curve  $\gamma\subset \wC$ can be viewed as a chord in $\gO=\wC\smallsetminus \{\zeta_0\}$ with endpoints tending to 
$\partial \gO$, where $\zeta_0 \in \gamma$.
In particular, if $\gamma_1$ and $\gamma_2$ are 
Jordan curves associated with the same welding map $\go$, and if $\gamma_1$ is piecewise analytic,
then by  Corollary \ref{local}
there is a \Mo{} map
$\tau$ such that $\gamma_2=\tau(\gamma_1)$.  Thus increasing piecewise analytic homeomorphisms of $\wR$, 
modulo pre and post composition with \Mo{} 
maps, correspond to piecewise analytic Jordan curves in $\wC$ modulo post composition with \Mo{} maps. 

Corollary \ref{local} also yields the following uniqueness result. 

\bigskip

\begin{thm}\label{unique} If \blue{$D\subsetneq \C$} is simply connected, if $a,b\in \partial D$, and if $\zeta\in D$, 
then there is a unique $C^1$
geodesic pair $\gamma$ in $(D;a,b,\zeta)$. 
There is a conformal map
$\varphi$ of $D$ onto $\D$ so that $\varphi(\gamma)$ is one of the geodesic pairs constructed in 
Corollary \ref{discpair}. \blue{If $D=\C$ and if $\g$ is a geodesic pair in $(D;\infty,\infty,\zeta)$ then $\g$ is either a logarithmic spiral about $\zeta$ or a straight line through $\zeta$.}

\end{thm}


\begin{proof} \blue{If $D\ne\C$,} by applying a conformal map of $D$ onto $\D$, we may suppose $D=\D$ and $\zeta=0$. Let $\gamma=\gamma_1\cup\gamma_2$ be a
geodesic pair in $(\D;a,b,0)$. \blue{We may also assume that
$f_+(0)=f_-(0)=\infty$, so that the corresponding welding map $\go$ is defined on an interval $J\subset \wR$ that contains} $\infty$ with $\go(\infty)=\infty$. 
Then
\begin{equation}\label{twoL}
\go(x)=
\begin{cases}
a_1 x +b_1 & \text{for}~ x>c_1\cr
a_2 x + b_2 & \text{for}~ x < c_2\cr
\end{cases}
\end{equation}
where $a_1, a_2 > 0$, $b_1,b_2,c_1,c_2 \in \R$, with $c_2 \le c_1$.
If $a_1 \ne a_2$, set $\varphi(x)=x/a_1 +(b_2-b_1)/(a_1-a_2)$ and
$\tau(x)=x+(a_2 b_1-a_1 b_2)/(a_1-a_2)$.
Then
\begin{equation}\label{spiralw}
\go_1(x)\equiv \tau\circ\go\circ\varphi(x)=
\begin{cases}
x &\text{for}~ x > \varphi^{-1}(c_1)\cr
\dfrac{a_2}{a_1}x &\text{for}~ x < \varphi^{-1}(c_2)\cr
\end{cases}
\end{equation}
Choose $\theta$ so that $a_2/a_1 =\exp(-2\pi \tan \theta)$ with
$-\pi/2 < \theta < \pi/2$. By Example \ref{nonsmooth}, for $x$ sufficiently large, the welding map
$\go_1(x)$ agrees with a welding map associated with a logarithmic spiral $S_\theta$. By Corollary \ref{local}
there is a conformal map $F$ defined in a neighborhood $U$ of $0$ with $F(S_\theta\cap U)=\gamma\cap
F(U)$. In particular, $\gamma$  does not have a continuous (unit) tangent at $0$.  So if $\gamma$ has a continuous
(unit) tangent at $0$, then it has a welding map with continuous derivative at $\infty$. By 
Theorem \ref{uniqconstr}, the welding map $\go$ for $\gamma$ in
$\D$ is associated with exactly one of the geodesic pairs constructed in 
Corollary \ref{discpair}.
By Lemma \ref{global} there is a conformal map $H$ of $\D$ onto $\D$ so that $H(\gamma)$ is one of these constructed
geodesic pairs. Because $H$ is a \Mo{} map fixing the origin, $H$ must be a rotation. There is exactly one
rotation of $\D$ and one $\theta$, $-\pi/2\le  \theta \le \pi/2$, so that $a,b$ are rotated to
$\ee^{\ii\theta}$ and $-\ee^{-\ii\theta}$.

\blue{If $D=\C$, and if $\g=\g_1\cup\g_2$ is a geodesic pair in $(D,\infty,\infty,\zeta)$, then the corresponding welding map must map $\wR$ onto $\wR$ and hence in (\ref{twoL}) we must have $c_1=c_2$ and $a_1 c_1+b_1=a_2 c_2 +b_2$. If $a_1\ne a_2$ then as in (\ref{spiralw}), $0=\varphi^{-1}(c_j)$ and $\go$
is conjugate to $\go_a(x)=x$ for $x > 0$ and 
$\go_a(x)=ax$ for $x < 0$, $a \ne 1$. This of course gives the logarithmic
spirals
discussed earlier.  If $a_1=a_2$ then $b_1=b_2$. Setting $\varphi(x)= x/a_1 -b_1/a_1$ and $\tau(x)=x$, we conclude $\go$ is conjugate to $\go_1(x)=x$ for $x\ne 0$. In this case, $\g$ is 
a straight line through $\zeta$.}
\end{proof}

\bigskip

The proof of Theorem \ref{unique} also gives the following information about geodesic pairs which do not
have a continuous tangent.

\bigskip

\begin{cor}\label{Dlocal}
If \blue{$D\subsetneq\wC$} is simply connected and if
$\gamma \in (D;a,b,\zeta)$ is a geodesic pair which does not have a continuous unit tangent at $\zeta$,
then $\gamma$ is asymptotically a logarithmic spiral near $\zeta$.
\end{cor}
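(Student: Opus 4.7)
The claim is essentially implicit in the proof of Theorem \ref{unique}, which already exhibits a local conformal equivalence between $\gamma$ near $\zeta$ and the logarithmic spiral construction of Example \ref{nonsmooth}. The task is to extract this argument cleanly and then verify that a local conformal map preserves the asymptotic spiral shape.

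First I normalize by choosing a Riemann map sending $D$ to $\D$ and $\zeta$ to $0$, so we may assume $D=\D$ and $\zeta=0$. By Corollary \ref{weldpairs}, after a Möbius change of variable placing the common point of the two pieces at $\infty$, a welding map of $\gamma$ takes the form $\go(x)=a_1 x+b_1$ on $(c_1,\infty)$ and $\go(x)=a_2x+b_2$ on $(-\infty,c_2)$, with $a_1,a_2>0$. I claim $a_1\ne a_2$: if instead $a_1=a_2$, then $\go$ has a continuous derivative at $\infty$, so Theorem \ref{uniqconstr} places $\go$ in the equivalence class of a welding of a curve from Corollary \ref{discpair}, and Lemma \ref{global} supplies a global conformal equivalence between $\gamma$ and one of those $C^1$ pairs, contradicting the hypothesis that $\gamma$ is not $C^1$ at $\zeta$.

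With $a_1\ne a_2$, apply the Möbius normalization from the proof of Theorem \ref{unique} to obtain an equivalent welding $\go_1$ equal to $x$ for $x$ large positive and $(a_2/a_1)x$ for $x$ large negative. Choose $\theta\in(-\pi/2,\pi/2)$ with $a_2/a_1=\exp(-2\pi\tan\theta)$. By Example \ref{weldnonsmooth}, $\go_1$ agrees near $\infty$ with the welding of the logarithmic spiral pair $S_\theta\cup(-S_\theta)$ at $0$. Corollary \ref{local} then provides a conformal map $F$ from a neighborhood $U$ of $0$ into $\D$, fixing $0$, and sending $(S_\theta\cup(-S_\theta))\cap U$ onto $\gamma$ near $0$.

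To finish, I expand $F(z)=\lambda z + O(z^2)$ with $\lambda=F'(0)\ne 0$. Parametrizing $S_\theta$ by $\eta(t)=e^{iA\pi}e^{At}$, where $A=1/(1-i\tan\theta)$ and $\Re A=\cos^2\theta>0$, one has $\eta(t)\to 0$ exponentially as $t\to -\infty$. Then
\[
F(\eta(t)) \;=\; \lambda\,\eta(t) + O(|\eta(t)|^2) \;=\; e^{At+c_0} + O\bigl(e^{2(\Re A)t}\bigr),
\]
where $c_0=\log(\lambda e^{iA\pi})$ is chosen with any fixed branch. The leading term $e^{At+c_0}$ traces a logarithmic spiral with the same rate $A$, and the error is strictly lower order. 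The same analysis applied to $-S_\theta$ yields the other branch, so $\gamma$ is asymptotic at $\zeta$ to a pair of logarithmic spirals of rate $A$. The only subtlety is verifying that the $O(z^2)$ correction from $F$ is swallowed by the leading spiral, which is immediate from the exponential contraction of $|\eta(t)|$ as $t\to-\infty$.
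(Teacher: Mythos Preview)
Your proof is correct and follows exactly the approach the paper indicates: the paper's proof of this corollary is simply the remark that it follows from the proof of Theorem \ref{unique}, and you have extracted precisely that argument. Your additional verification that a local conformal map $F(z)=\lambda z+O(z^2)$ preserves the logarithmic spiral asymptotics is a welcome detail that the paper leaves implicit.
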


\bigskip

 The {\it Schwarzian derivative} of an analytic function $f$ is given by
\begin{equation}
    S_f(z)=\left(\frac{f^{\prime\prime}(z)}{f^\prime(z)}\right)^\prime -\half \left(\frac{f^{\prime\prime}(z)}{f^\prime(z)}\right)^2.
\end{equation}
The composition rule for the Schwarzian derivative is
\begin{equation}\label{Schwarzcomp}
S_{g\circ h}(z)= S_g(h(z))\left(h^\prime(z)\right)^2 +S_h(z).
\end{equation}
The Schwarzian derivative of a \Mo{} map is zero. If $S_f=S_g$, then there is a \Mo{} map $\sigma$
such that $f=\sigma\circ g$. 

Our construction also gives some information about the Schwarzian derivative of the conformal map $f_\pm$ in the next corollary.

\begin{cor}\label{SDpair} If $D\ne \C$ is simply connected and if $\gamma=\gamma_1\cup\gamma_2$ is a 
$C^1$ geodesic pair in $(D;a,b,\zeta)$, 
then the Schwarzian derivative $S_f$ of the conformal map $f_{\pm}:D\smallsetminus \gamma_1\cup
\gamma_2\to
\H\cup \bL $ extends
to be analytic on $D$, except for a simple pole at $\zeta$. 
The residue of the pole of $S_f$ at $\zeta$ is 
$(4\ii \sin \theta)h^\prime(\zeta)$, where $h$ is the conformal map of
$D$ onto $\D$ with $h(\zeta)=0$ and $h(a)=\ee^{\ii\theta}$
and $h(b)=-\ee^{-\ii\theta}$. 

\end{cor}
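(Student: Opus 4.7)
The plan is to reduce to the canonical geodesic pair in $\D$ from Corollary \ref{discpair} and apply the composition rule (\ref{Schwarzcomp}) together with the explicit formula (\ref{SG}).

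First, I would apply Theorem \ref{unique} to obtain a conformal map $h\colon D\to\D$ with $h(\zeta)=0$, $h(a)=e^{i\theta}$, and $h(b)=-e^{-i\theta}$, under which $h(\gamma)$ coincides with the geodesic pair constructed in Corollary \ref{discpair}. As established in the proof of that corollary, Schwarz reflection extends $G_\theta$ to a conformal map of $\D\smallsetminus h(\gamma_2)$ onto $\C\smallsetminus(-\infty,B]$, whose restriction to $\D\smallsetminus h(\gamma)$ is conformal onto $\C\smallsetminus\R=\H\cup\bL$ (with the two components possibly interchanged). Consequently, up to post-composition by a M\"obius map of $\H\cup\bL$, one has $f_\pm = G_\theta\circ h$ on $D\smallsetminus\gamma$. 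Since M\"obius post-composition does not affect the Schwarzian, $S_{f_\pm} = S_{G_\theta\circ h}$.

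Next, the composition formula (\ref{Schwarzcomp}) gives
\[
S_{f_\pm}(z) \;=\; S_{G_\theta}\bigl(h(z)\bigr)\bigl(h'(z)\bigr)^2 + S_h(z).
\]
Because $h$ is conformal on $D$, the term $S_h$ is analytic on $D$. By (\ref{SG}), $S_{G_\theta}(w) = \tfrac{4i\sin\theta}{w} + R(w)$, where $R$ is rational with poles only at $e^{i\theta}$ and $-e^{-i\theta}$. Since $h(D)=\D$ and these poles lie on $\partial\D$, the composition $R(h(z))$ is analytic on $D$. The remaining contribution $\tfrac{4i\sin\theta}{h(z)}(h'(z))^2$ has a simple pole precisely at $\zeta$ (the unique preimage of $0$ under $h$ in $D$), and the expansion $h(z) = h'(\zeta)(z-\zeta) + O((z-\zeta)^2)$ gives
\[
\lim_{z\to\zeta}(z-\zeta)\cdot\frac{4i\sin\theta}{h(z)}\bigl(h'(z)\bigr)^2 \;=\; 4i\sin\theta\cdot h'(\zeta),
\]
which is the asserted residue.

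The only conceptual point requiring care is that formula (\ref{SG}), derived from the explicit expression for $G_\theta$ on the half-disk $D_+$, correctly describes $S_{G_\theta}$ throughout the larger Schwarz-extension domain $\D\smallsetminus h(\gamma_2)$ on which $G_\theta\circ h$ naturally lives. This presents no obstacle: the Schwarzian is a local holomorphic invariant, and the right-hand side of (\ref{SG}) is meromorphic on $\D$, so it is the unique analytic continuation from $D_+$ to the connected domain $\D\smallsetminus h(\gamma_2)$. After this observation, the proof is a routine assembly of the composition rule and the residue calculation above.
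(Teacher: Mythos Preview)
Your proof is correct and follows essentially the same approach as the paper's: reduce to the model geodesic pair via Theorem \ref{unique}, write $f_\pm$ as $G_\theta\circ h$ up to M\"obius post-composition, and read off the pole and residue from the composition rule (\ref{Schwarzcomp}) together with (\ref{SG}). You have simply made explicit the residue computation and the analytic continuation of (\ref{SG}) that the paper's terse proof leaves to the reader.
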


\begin{proof}
Conformal maps have non-vanishing derivative, so that $S_g$ is analytic whenever
$g$ is conformal.
The composition rule for
Schwarzian derivatives (\ref{Schwarzcomp})
applied to a \Mo{} map $g$ and $f_\pm$ shows that the choice of the maps onto $\H$ and $\bL$ does not
affect $S_f$. If $h$ is a conformal map of $D$ onto $\D$ with $h(\zeta)=0$ and $h(a)=\ee^{\ii\theta}$
and $h(b)=-\ee^{-\ii\theta}$, and $g=G_\theta$ as in (\ref{Gc}), then 
by (\ref{Schwarzcomp}), Theorem \ref{unique} and (\ref{SG}) the corollary follows.
\end{proof}

\begin{remark}If $\gamma_{a,\zeta}$ and $\gamma_{\zeta, b}$ are hyperbolic geodesics in $D$ from
$a$ to $\zeta$ and from $\zeta$ to $b$ with tangent directions $\ee^{\ii t}$ and $\ee^{\ii s}$ at $\zeta$, 
then by Theorem \ref{Dgeopair}, the tangent
direction of the geodesic pair in $(D;a,b,\zeta)$ is $\ee^{\ii u}$ at $\zeta$, where $u=(t+s)/2$, and
the corresponding $\theta$ is given by $\theta=(t-s)/2$.
 The density of the hyperbolic metric $\rho(z)|\dd z|$ in $D$ at $z$ is given by $\rho(z)=|h'(z)|$, where $h$ is a conformal map of $D$ onto $\D$. 
Thus, we can write the residue of $S_f$ at $\zeta$ purely in terms of the hyperbolic geometry on $D$ as
$$4\ii\sin\left(\frac{t-s}{2}\right)\rho(\zeta) \ee^{-\ii\left(t+s\right)/2}.$$
\end{remark}

\bigskip

\section{Piecewise geodesic curves}\label{sec:pwcurves}

\subsection{General facts}
If $\gamma=\gamma_1\cup \dots \cup \gamma_n$ is a Jordan curve 
in $\wC$, then 
a welding map
$\go$ for $\gamma$ in $\wC$ is an increasing homeomorphism of 
$\wR=J_1\cup \dots \cup J_n$ into $\wR$. Note that  
$\go\vert_{J_j}$  is a welding map for $\gamma_j$
in $\wC\smallsetminus  (\gamma\smallsetminus \gamma_j)$. 

An increasing homeomorphism $h$ of $\R$ onto $\R$
will be called {\bf piecewise \Mo} if there are $\{x_j\}$ with
$-\infty=x_0 < x_1 < \dots <  x_n =+\infty$ so that $h$ is given by 
a \Mo{} map $L_j$ on $(x_{j-1},x_{j})$, for each $j=1,\dots,n$.
A homeomorphism $h$ of $\R$ onto $\R$ is called {\bf quasisymmetric}
if there is a constant $M < \infty$ such that
\begin{equation}\label{QS}
\dfrac{1}{M}\le \dfrac{|h(x+t)-h(x)|}{|h(x)-h(x-t)|} \le M,
\end{equation}
for all $x,t\in\R$.

A Jordan curve $\gamma\subset \wC$ 
will be called {\bf piecewise geodesic} if
$\gamma=\gamma_1\cup \gamma_2\cup \dots\cup \gamma_n$ where $\gamma_j$ is a hyperbolic geodesic in 
$\wC\smallsetminus (\gamma\smallsetminus \gamma_j)$, for
$j=1,\dots,n$. The points $\zeta_j=\gamma_j\cap \gamma_{j+1}$ will be called the {\it vertices} of $\gamma$, and the $\gamma_j$ the {\it edges}.

\bigskip

\begin{cor}\label{pw}
A piecewise \Mo{} increasing homeomorphism of $\wR$ onto $\wR$ is a welding map associated with a piecewise geodesic 
Jordan curve. Conversely, if
 $\gamma$ is a piecewise geodesic Jordan curve, then 
the welding maps associated with $\gamma$ are piecewise \Mo{} homeomorphisms.
\end{cor}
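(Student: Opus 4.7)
The plan is to apply Lemma~\ref{hypgeo} in both directions, exploiting the observation made at the start of the section: if $\omega=f_-\circ f_+^{-1}$ is a welding for $\gamma=\gamma_1\cup\cdots\cup\gamma_n$, then each restriction $\omega|_{J_j}$ is itself a welding for the edge $\gamma_j$ viewed as a chord in the simply connected region $\wC\smallsetminus(\gamma\smallsetminus\gamma_j)$.

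For the forward direction, suppose $\gamma$ is piecewise geodesic. Because $\gamma_j$ is a hyperbolic geodesic in $\wC\smallsetminus(\gamma\smallsetminus\gamma_j)$, Lemma~\ref{hypgeo} applied to $\omega|_{J_j}$ yields that $\omega|_{J_j}$ is \Mo. Since the intervals $J_j$ tile $\wR$, this shows $\omega$ is piecewise \Mo.

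For the converse, let $h$ be a piecewise \Mo{} increasing homeomorphism of $\wR$ with breakpoints $-\infty=x_0<x_1<\cdots<x_n=+\infty$. After pre- and post-composing with \Mo{} maps that preserve $\H$ (which replaces $h$ by an equivalent welding), I may arrange that $h(\infty)=\infty$ and that $\infty$ lies in the interior of a single \Mo{} piece, so $h$ becomes a piecewise \Mo{} homeomorphism of $\R$ with finitely many breakpoints and finite positive one-sided derivatives at each. A direct check of \eqref{QS} at and between breakpoints, using that each piece is $C^\infty$ with positive derivative on a neighborhood of its closed interval and that $h$ is asymptotically linear near $\infty$, shows that $h$ is quasisymmetric. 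The classical conformal welding theorem for quasisymmetric homeomorphisms then produces a Jordan curve $\gamma\subset\wC$ with welding $h$. Setting $\gamma_j=f_+^{-1}(\overline{(x_{j-1},x_j)})$, we have $\gamma=\gamma_1\cup\cdots\cup\gamma_n$, and by the observation above $h|_{(x_{j-1},x_j)}$ is a welding for $\gamma_j$ with respect to $\wC\smallsetminus(\gamma\smallsetminus\gamma_j)$. Since this restriction is \Mo, Lemma~\ref{hypgeo} shows that $\gamma_j$ is a hyperbolic geodesic in $\wC\smallsetminus(\gamma\smallsetminus\gamma_j)$, so $\gamma$ is piecewise geodesic.

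The main obstacle is the existence step in the converse direction, where I invoke the classical welding theorem for quasisymmetric homeomorphisms. An alternative, more self-contained route would be to construct $\gamma$ by an explicit inductive gluing procedure, attaching one \Mo{} piece at a time via Schwarz reflection across each interval, in the spirit of the construction of $G_\theta$ in Section~2; Lemma~\ref{global} then guarantees that the resulting curve is uniquely determined up to conformal equivalence.
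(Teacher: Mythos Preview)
Your proposal is correct and follows essentially the same approach as the paper: both directions rest on Lemma~\ref{hypgeo} applied edge-by-edge, and the existence step uses quasisymmetry together with the classical welding theorem. The only difference is cosmetic: the paper normalizes with a single \Mo{} composition so that $\omega(\infty)=\infty$, observes directly that $1/c<\omega'(x)<c$ on all of $\R$, and deduces \eqref{QS} from the mean-value theorem, which is slightly cleaner than your ``direct check at and between breakpoints.''
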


\begin{proof}
Suppose $\go$ is an  increasing homeomorphism of 
$\wR=J_1\cup \dots \cup J_n$ onto $\wR$ such that 
$\go\vert_{J_j}$ is \Mo. By composing $\go$ with a single increasing \Mo{} map, we may suppose that $\go(\infty)=\infty$. 
Then there is a constant $c<\infty$ such that $1/c<\go'(x)<c$ for all $x\in\R$.
By the mean-value theorem applied to the numerator and denominator of (\ref{QS}), $\go$ is quasisymmetric. By \cite[page 92]{LV} or \cite{LA}, $\go$ is a welding map for
a Jordan curve $\gamma=\gamma_1\cup \dots \cup\gamma_n$.
By Lemma \ref{hypgeo}, 
because
$\go\vert_{J_j}$ is a welding map for $\gamma_j$ in $\wC\smallsetminus
(\gamma\smallsetminus \gamma_j)$, ~$\gamma_j$ is a hyperbolic
geodesic in $\wC\smallsetminus
(\gamma\smallsetminus \gamma_j)$. The converse also follows immediately from Lemma \ref{hypgeo}.
\end{proof}

 A similar result holds for piecewise geodesic Jordan chords in a simply connected region $D\ne\C$. In this case the welding maps are piecewise \Mo{} homeomorphisms of a proper sub-interval in $\wR$.

It is natural to ask if the edges of a piecewise geodesic Jordan curve are hyperbolic geodesics of the punctured sphere, with punctures at the vertices. This is generally not the case:

\begin{lemma}\label{circle}
If  $\g = \cup_{i = 1}^n  \gamma_i$ is a piecewise geodesic Jordan curve and if each edge $\gamma_i$ is a 
hyperbolic geodesic in the punctured sphere $\Sigma : = \wC \smallsetminus \{\zeta_1, \ldots, \zeta_n\}$ between $\zeta_i$ and $\zeta_{i+1}$, then $\g$ is a circle.
\end{lemma}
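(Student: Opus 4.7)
The plan is to derive from both hypotheses the existence of an anti-M\"obius transformation of $\wC$ that fixes $\gamma$ pointwise; since the fixed set of any nontrivial anti-M\"obius transformation is a generalized circle and $\gamma$ is a Jordan curve, this forces $\gamma$ to equal that circle. I assume $n \ge 3$, for otherwise $\Sigma$ is not hyperbolic and the hypothesis is vacuous.

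Using the hypothesis that each $\gamma_i$ is a geodesic in $\Sigma$, and that the universal cover $\pi \colon \D \to \Sigma$ is a local isometry of the hyperbolic metrics, each edge $\gamma_i$ lifts to a hyperbolic geodesic in $\D$ joining two parabolic cusps of the deck group $\Gamma$. A connected lift $\tilde D_+ \subset \pi^{-1}(D_+)$ is therefore an ideal $n$-gon in $\D$, with neighboring tile $\tilde D_-$ (a lift of $D_-$) across any fixed edge $\tilde\gamma_i$. Using the piecewise-geodesic hypothesis and Lemma~\ref{hypgeo}, each edge $\gamma_i$ admits an anti-conformal Schwarz reflection $\phi_i$ of the simply connected domain $\Omega_i = \wC \smallsetminus (\gamma \smallsetminus \gamma_i)$ that fixes $\gamma_i$ pointwise and swaps $D_+$ with $D_-$. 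Lifting $\phi_i$ to the component of $\pi^{-1}(\Omega_i)$ containing $\tilde D_+$ gives an anti-conformal involution fixing $\tilde\gamma_i$ pointwise, which by uniqueness coincides with the restriction of the hyperbolic reflection $R_i$ of $\D$ across $\tilde\gamma_i$. Matching the Schwarz extension of $f_+ \circ \pi$ across $\tilde\gamma_i$ with $\omega_i^{-1} \circ f_- \circ \pi$ (where $\omega_i$ is the welding M\"obius on $f_+(\gamma_i)$) via the identity theorem on their common overlap near $\tilde\gamma_i$ then forces $R_i(\tilde D_+) = \tilde D_-$.

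Since $R_i$ maps adjacent tiles to adjacent tiles (sending each edge of $\tilde D_+$ to an edge of $\tilde D_-$ lying in $\pi^{-1}(\gamma)$), it preserves the whole tiling, normalizes $\Gamma$, and descends to an anti-conformal involution $\iota_i$ of $\Sigma$. Extended to $\wC$, each $\iota_i$ is an anti-M\"obius transformation fixing $\gamma_i$ pointwise; its fixed-point set is a generalized circle $C_i \supset \gamma_i$, so every edge is a circular arc. To show the $C_i$ all coincide: because $\iota_i$ swaps $D_+$ with $D_-$, it reverses the orientation of $\gamma$ while fixing the two endpoints of $\gamma_i$, and so pairs up the remaining $n-2$ vertices symmetrically about the midpoint of $\gamma \smallsetminus \gamma_i$. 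For $n=3$ the single non-endpoint vertex must be fixed; hence each $\iota_i$ is the unique anti-M\"obius reflection across the generalized circle through the three vertices, giving $C_1 = C_2 = C_3 =: C$ and therefore $\gamma = C$. For $n \ge 4$ one analyzes the conformal M\"obius compositions $\iota_i \circ \iota_{i+1}$ (which preserve $D_+$ setwise and the puncture set) and uses M\"obius rigidity on finite point sets, together with the interactions at shared vertices, to force every vertex-permutation to collapse to the identity—again giving $C_1 = \cdots = C_n =: C$ and hence $\gamma = C$.

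The main obstacle I anticipate is the last step for $n \ge 4$: carefully tracking the vertex-permutations induced by the various $\iota_i$'s and showing, via the rigidity of M\"obius automorphisms on the $n$ punctures, that they all reduce to the identity. The case $n = 3$ is immediate by pigeonhole, but the combinatorial interplay among the $n$ anti-M\"obius reflections for $n \ge 4$ must be handled by simultaneously exploiting both hypotheses.
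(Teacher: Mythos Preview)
Your approach---lifting to the universal cover of $\Sigma$, identifying the lift of the Schwarz reflection $\phi_i$ with the hyperbolic reflection across $\tilde\gamma_i$, and then arguing that the resulting involutions must all agree---is essentially the paper's. But the gap you flag for $n\ge 4$ is caused by an error in your orientation analysis. You write that ``because $\iota_i$ swaps $D_+$ with $D_-$, it reverses the orientation of $\gamma$.'' The opposite is true: an anti-conformal involution that \emph{interchanges} the two sides of a Jordan curve restricts to an \emph{orientation-preserving} homeomorphism of the curve (think of $z\mapsto 1/\bar z$ on the unit circle, which is the identity there). Indeed, an orientation-reversing circle homeomorphism with two fixed points would swap the two complementary arcs, contradicting the fact that your $\iota_i$ fixes $\gamma_i$ pointwise.

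Once the orientation is corrected, your acknowledged obstacle dissolves for all $n$ at once, and this is exactly how the paper finishes. The boundary map $\iota_i|_\gamma$ is orientation-preserving, fixes $\zeta_i,\zeta_{i+1}$, and (since the lift $z\mapsto\bar z$ sends cusps of the lifted domain to cusps) permutes the vertex set; hence on the arc $\gamma\smallsetminus\gamma_i$ it preserves the linear order of the intermediate vertices and so fixes each one. With every reflection fixing all $n\ge 3$ boundary vertices, the paper concludes in one line: $\phi_i\circ\phi_j|_{\Omega_+}$ is a conformal automorphism of the Jordan domain $\Omega_+$ fixing at least three boundary points, hence the identity, so all the $\phi_i$ agree on $\Omega_\pm$ and glue to a single anti-holomorphic involution of $\wC$ whose fixed set is $\gamma$. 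No combinatorial analysis of vertex permutations is needed, and the paper never needs your claim that the hyperbolic reflection normalizes $\Gamma$ and descends to all of $\Sigma$; it works directly with the reflection on $\Omega_+\cup\Omega_-\cup\gamma_i$ and its boundary extension.
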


\begin{proof}
   For each  $i \in 1, \ldots, n$, $\gamma_i$ is the hyperbolic geodesic in the simply connected domain $\O_+ \cup \O_- \cup \gamma_i$. Hence, there is a antiholomorphic involution $R_i : \O_+ \cup \O_- \cup \gamma_i \to \O_+ \cup \O_- \cup \gamma_i$ where $\gamma_i$ is the set of fixed points.  Both maps $R_i|_{\O_+}$ and $R_i|_{\O_-}$ extend to the boundary and induce two homeomorphisms $\g \to \g$.
   
   We claim that $R_i|_{\O_+}$ and $R_i|_{\O_-}$ also fix all $\zeta_k$, for $k \in 1, \ldots, n$.
   In fact, let $p : \D \to \Sigma$ be the universal cover map. Consider 
   a connected component $D$ of $p^{-1} (\O_+ \cup \O_- \cup \gamma_i)$. Its boundary is formed by circular arcs which project to $\g \smallsetminus \gamma_i$ (since $\gamma_k$ are assumed to be hyperbolic geodesics in $\Sigma$), and $p^{-1} (\gamma_i)$ is both a conformal geodesic in $D$ and a circular arc orthogonal to the boundary. Without loss of generality, we assume $p^{-1} (\gamma_i) =  (-1,1)$. Then the involution $R_i$ lifts to an antiholomorphic map $\tilde R_i$ on $D$ which equals the identity on $(-1,1)$ and therefore is the map $z \mapsto \bar{z}$ on $D$. It implies that $\tilde R_i$ maps cusps to cusps. Therefore $R_i$ fixes all $\zeta_k$.
   
   Now if $i \neq j$, then $R_i \circ R_j|_{\O_+}$ is a conformal map $\O_+ \to \O_+$ whose extension to $\g$ fixes all $\zeta_k$, so $R_i \circ R_j|_{\O_+}$ is the identity map. This implies that $R_i = R_j$. We obtain that $R_i$ extends to an antiholomorphic involution $\wC \to \wC$ with set of fixed points $\g$.  Therefore $\g$ is a circle.
\end{proof}

See also the closely related comment after Definition \ref{gp}.

\subsection{$C^1$ piecewise geodesic 
Jordan curves}

Let us first comment on the existence of $C^1$ piecewise geodesic Jordan curves.
In \cite[Prop.\,2.13]{RW} two of the authors introduced the notion of \emph{Loewner energy} for Jordan curves and
showed the existence of minimizers among all curves passing through a given set of vertices. The union of two adjacent edges in those minimizers are geodesic pairs in the complement of the rest of the curve, and do not have logarithmic spirals. By Corollary~\ref{Dlocal}, they are $C^1$ piecewise geodesic Jordan curves\footnote{The proof of the regularity stated in \cite[Prop.\,2.13]{RW} was sketched out and referred to the present work for details.  However, the proof of the existence of the minimizer was self-contained.}.

One of the consequences of the explicit formula for $C^1$ geodesic pairs is the following. 
As before if $\gamma=\gamma_1\cup\dots\cup\gamma_n$ is a piecewise geodesic curve,
let $\zeta_j=\gamma_j\cap\gamma_{j+1}$, $j=1,\dots,n$ where 
$\gamma_{n+1}\equiv\gamma_1$.


\begin{thm}\label{pwclosed}
Suppose $\gamma\subset\C$ 
is a $C^1$ piecewise geodesic Jordan curve. 
If $\gO_+, \gO_-$ are the two regions given by $\wC\smallsetminus \gamma$, and if
$f_+, f_-$ are conformal maps onto $\H$ and $\bL$ respectively, set
$f=f_+$ on $\gO_+$ and $f=f_-$ on $\gO_-$. Then the Schwarzian derivative, 
$S_f$, of $f$ extends to be meromorphic in $\wC$ and is given by
\begin{equation}\label{spoles}
S_f(z)=\sum_{j=1}^n \frac{c_j}{z-\zeta_j},
\end{equation}
for some constants $c_j$ satisfying
\begin{equation}\label{sums}
\sum_{j=1}^n c_j=\sum_{j=1}^n c_j \zeta_j =\sum_{j=1}^n c_j \zeta_j^2=0.
\end{equation}
\end{thm}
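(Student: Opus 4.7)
The plan is to establish that $S_f$ extends to a meromorphic function on $\wC$ with at most simple poles at the vertices, and then extract the moment conditions \eqref{sums} from the behavior of $S_f$ at $\infty$. This splits into three local analyses: extension across smooth parts of each edge, analysis at each vertex via Corollary~\ref{SDpair}, and expansion at $\infty$.

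First, I would show that $S_f$ extends holomorphically across the interior of every edge $\gamma_j$. Since $\gamma_j$ is a hyperbolic geodesic in the simply connected domain $\wC\smallsetminus(\gamma\smallsetminus\gamma_j)$, Lemma~\ref{hypgeo} applied to the welding of $\gamma_j$ in that domain produces a \Mo{} map $\sigma_j$ with $f_-=\sigma_j\circ f_+$ along $\gamma_j$. Schwarz reflection across the analytic arc $\gamma_j$ extends $f_+$ analytically across $\gamma_j\smallsetminus\{\zeta_{j-1},\zeta_j\}$, the identity theorem identifies this extension with $\sigma_j^{-1}\circ f_-$, and the composition rule \eqref{Schwarzcomp} then gives $S_{f_+}=S_{f_-}$ on the extended neighborhood.

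Next, to obtain a simple pole at each vertex $\zeta_j$, the plan is to reduce to Corollary~\ref{SDpair}. Set $D_j:=\wC\smallsetminus(\gamma\smallsetminus(\gamma_j\cup\gamma_{j+1}))$; this is simply connected, and unwinding $D_j\smallsetminus\gamma_{j+1}=\wC\smallsetminus(\gamma\smallsetminus\gamma_j)$ (together with its symmetric counterpart) shows that $(\gamma_j,\gamma_{j+1})$ is a $C^1$ geodesic pair in $(D_j;\zeta_{j-1},\zeta_{j+1},\zeta_j)$. The key observation is
\begin{equation*}
D_j\smallsetminus(\gamma_j\cup\gamma_{j+1}) = \wC\smallsetminus\gamma = \gO_+\cup\gO_-,
\end{equation*}
so that the two-sided map $g$ supplied by Corollary~\ref{SDpair} has the same domain as $f$ and differs from $f$ only by post-composition with a \Mo{} self-map of $\H$ on $\gO_+$ and of $\bL$ on $\gO_-$. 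Because the Schwarzian is unchanged by post-composition with \Mo{} maps, $S_f=S_g$ on $\gO_+\cup\gO_-$, and Corollary~\ref{SDpair} then gives that $S_f$ has at most a simple pole at $\zeta_j$.

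Combining the previous two steps, $S_f$ is meromorphic on $\wC$ with at most simple poles at $\zeta_1,\ldots,\zeta_n\in\C$, so $S_f(z)=\sum_{j=1}^n c_j/(z-\zeta_j)+P(z)$ for some polynomial $P$. Since $\infty$ lies in one of $\gO_\pm$, $f$ is conformal at $\infty$. Applying the composition rule \eqref{Schwarzcomp} with $\phi(w)=1/w$ yields $S_{f\circ\phi}(w)=S_f(1/w)/w^4$, and because $f\circ\phi$ is conformal at $0$ its Schwarzian is holomorphic there, forcing $S_f(z)=O(z^{-4})$ as $z\to\infty$. This kills $P$; expanding
\begin{equation*}
\sum_{j=1}^n\frac{c_j}{z-\zeta_j} = \sum_{k\ge 0}\frac{1}{z^{k+1}}\sum_{j=1}^n c_j\zeta_j^k
\end{equation*}
and requiring the first three coefficients to vanish gives \eqref{sums}. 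The main obstacle I anticipate is the bookkeeping in the second step: verifying the set-theoretic identifications to confirm that $f$ and the map from Corollary~\ref{SDpair} live on the same domain, and that the piecewise geodesic hypothesis on $\gamma$ really does restrict to the geodesic pair hypothesis for $(\gamma_j,\gamma_{j+1})$ inside $D_j$.
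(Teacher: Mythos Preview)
Your proposal is correct and follows essentially the same approach as the paper: apply Corollary~\ref{SDpair} at each vertex to get simple poles, use the composition rule at $\infty$ to obtain $S_f(z)=O(z^{-4})$, and read off \eqref{sums} from the Laurent expansion. Your separate first step (extension across the interior of each edge via Lemma~\ref{hypgeo} and reflection) is fine but redundant, since Corollary~\ref{SDpair} already gives analyticity on all of $D_j$ except at $\zeta_j$, and the interiors of $\gamma_j$ and $\gamma_{j+1}$ lie in $D_j$; varying $j$ thus covers every edge as well as every vertex.
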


\begin{proof} By Corollary \ref{SDpair} applied  $\gamma_j\cup\gamma_{j+1}$ in
$D=\wC\smallsetminus (\gamma\smallsetminus(\gamma_j\cup\gamma_{j+1}))$, $j=1,\dots,n$,
$S_f$ is analytic in $\wC$ except for simple poles at each $\zeta_j$. To examine the
behavior of $S_f$ near $\infty$, we can use (\ref{Schwarzcomp}) 
to obtain
$S_{f}(z) = S_{f\circ g}(1/z){z^{-4}}$, where $g(z)=1/z$. 
Because $f\circ g$ is analytic 
at $0$, ~$S_f(z)=O(z^{-4})$ near $\infty$. Thus we can write $S_f$ in the form
(\ref{spoles}), and expanding this form in powers of $1/z$ we obtain (\ref{sums}). 
\end{proof}

The residues $c_j$ are reminiscent to the \emph{accessory parameters} on the $n$-punctured sphere. Similar to \cite{TZ1,TZ2}, we relate $c_j$ to the variation of the Loewner energy in the companion paper \cite{MRW}. 

If $\gamma\subset\C$ is a $C^1$ piecewise geodesic curve, 
then it follows from Theorem
\ref{pwclosed} that
$S_f(z)=q(z)\prod_1^n(z-\zeta_j)^{-1}$, where $q$ is a polynomial of degree $\le n-4$.

\begin{remark}\label{rem:three_point_Sch}
One consequence is that 
$S_f(z)\equiv 0$ if $n \le 3$, 
which implies that $\gamma$ is a circle or a line through the points
$\{\zeta_j\}$. This implies the
curve $\gamma$ is unique when \blue{$n = 3$}.
\end{remark}

If $n=4$ we must have 
\begin{equation}\label{four}
S_f=\frac{C}{(z-\zeta_1)(z-\zeta_2)(z-\zeta_3)(z-\zeta_4)}.
\end{equation}

A version of Theorem \ref{pwclosed} holds for a piecewise geodesic curve $\gamma\subset \wC$ 
with continuous tangent. If $\infty$ is not one of the $\zeta_j$, 
we can use (\ref{Schwarzcomp}) with $g=f$ and \Mo{} $h$ to obtain the same statement as in Theorem
\ref{pwclosed}. If $\infty$ is one of the $\zeta_j$, then again using (\ref{Schwarzcomp})
with $g=f$ and \Mo{} $h$, we obtain (\ref{spoles}) with $n-1$ terms,
one for each finite $\zeta_j$. In this case $S_f=O(z^{-3})$ near $\infty$ so that (\ref{sums}) is replaced by
\begin{equation}
\sum_{j=1}^n c_j=\sum_{j=1}^n c_j \zeta_j =0.
\end{equation}

\bigskip

We were unable to explicitly solve the following.

\bigskip


\begin{prob}
For which values of $C$ does there exist a piecewise geodesic Jordan curve $\gamma$ with vertices $\zeta_1,\zeta_2,\zeta_3,\zeta_4$
such
that the Schwarzian derivative of the associated conformal maps of
$\wC\smallsetminus \gamma$ onto $\H, \bL$ satisfy (\ref{four})?
\end{prob}

\bigskip

There are several ways to normalize the welding maps.  For the next theorem, the
following normalization is convenient. If $\go$ is a piecewise \Mo{} 
welding map of $\wR$ onto $\wR$ with continuous derivative, then
we can replace $\go$ with $\go\circ \sigma_1$, where $\sigma_1$ is a \Mo{} map so that
$\go$ is analytic on $\wR\smallsetminus \{x_j\}_0^{n}$ with 
$x_0=\infty$, $0=x_1 < x_2 < \dots < x_n=1$.
Moreover, post composing $\go$ with another \Mo{} map, we may suppose that $w(x)=x$ for 
$x < 0$. Because $\go$ has continuous derivative at $\infty$, we then have $\go(x)=x+b$
for some $b\in \R$, when $x > 1$. If $n \ge 2$, then there is exactly one welding map
with this normalization in each equivalance class of piecewise \Mo{} welding maps modulo pre and post
composition by \Mo{} maps.

\bigskip

\begin{thm}\label{weldthm} Suppose $0=x_1< x_2 < \dots < x_n=1$ and $0=y_1< y_2 <
\dots < y_n$. Then
there is a normalized piecewise \Mo{} welding map $\go$ of $\wR$,
analytic on $\R \smallsetminus\{x_1,\dots,x_n\}$, with continuous derivative such that 
\begin{equation*}
\go(\infty)=\infty, \  \go'(\infty)=1, ~{\rm and}~  \go(x_j)=y_j, \  j=1,\dots,n
\end{equation*}
 if and only if
\begin{equation}\label{product}
1= \frac{y_n-y_{n-1}}{x_n-x_{n-1}}\cdot \frac{x_{n-1}-x_{n-2}}{y_{n-1}-y_{n-2}} \cdot
\frac{y_{n-2}-y_{n-3}}{x_{n-2}-x_{n-3}} \cdots 
\end{equation}
where the last term in (\ref{product}) is $\frac{y_2-y_1}{x_2-x_1}$ when $n$ is even and
$\frac{x_2-x_1}{y_2-y_1}$ when $n$ is odd.
Moreover such a piecewise \Mo{} welding map is unique.
\end{thm}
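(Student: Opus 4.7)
The plan is to decompose $\go$ into its $n+1$ Möbius pieces and track the common one-sided derivatives at the breakpoints. Denote $\go = L_j$ on the interval $(x_{j-1}, x_j)$, where we set $x_0 := -\infty$ and $x_{n+1} := +\infty$. The normalization preceding the theorem gives $L_1(x) = x$ on $(-\infty,0)$, and the requirement $\go'(\infty) = 1$ together with continuity of the derivative at $\infty$ forces $L_{n+1}$ to be affine of slope $1$ on $(1,+\infty)$, namely $L_{n+1}(x) = x + b$ for some $b \in \R$; continuity at $x_n = 1$ then fixes $b = y_n - 1$. Let $\mu_j := \go'(x_j)$ for $j = 1,\dots,n$ denote the common value of the two one-sided derivatives of $\go$ at $x_j$. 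The two boundary pieces immediately give $\mu_1 = 1 = \mu_n$.

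The main computational input is the following elementary identity: any increasing Möbius map $L$ with $L(p) = P$ and $L(q) = Q$ satisfies
\[
L'(p)\, L'(q) = \left(\frac{Q - P}{q - p}\right)^{\!2},
\]
and $L$ is uniquely determined by these two values together with either one of $L'(p), L'(q)$. To see this, parametrize such an $L$ via $(L(x) - P)/(Q - L(x)) = \lambda (x-p)/(q-x)$ for some $\lambda > 0$, differentiate, and evaluate at $x = p$ and as $x \to q$; this yields $L'(p) = \lambda(Q-P)/(q-p)$ and $L'(q) = (Q-P)/(\lambda(q-p))$. Applying this identity to the middle piece $L_j$ on $(x_{j-1}, x_j)$ for each $j = 2,\dots,n$ gives $\mu_{j-1}\,\mu_j = s_j^{2}$, where $s_j := (y_j - y_{j-1})/(x_j - x_{j-1}) > 0$. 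Starting from $\mu_1 = 1$, the two-term recursion yields $\mu_n = \prod_{k=2}^{n} s_k^{\,2(-1)^{n-k}}$; since every $\mu_j$ and $s_k$ is positive, the constraint $\mu_n = 1$ is equivalent to $\prod_{k=2}^{n} s_k^{\,(-1)^{n-k}} = 1$, which is precisely equation (\ref{product}). This establishes necessity.

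For sufficiency and uniqueness, assume (\ref{product}) holds. Define $\mu_1,\dots,\mu_n$ recursively by $\mu_1 := 1$ and $\mu_j := s_j^{2}/\mu_{j-1}$; the hypothesis forces $\mu_n = 1$. For each $j = 2,\dots,n$, take $L_j$ to be the unique increasing Möbius map with $L_j(x_{j-1}) = y_{j-1}$, $L_j(x_j) = y_j$, and $L_j'(x_{j-1}) = \mu_{j-1}$, provided explicitly by the parametrization above with $\lambda_j = \mu_{j-1}(x_j - x_{j-1})/(y_j - y_{j-1}) > 0$; the displayed identity then forces $L_j'(x_j) = \mu_j$, so consecutive pieces glue together in a $C^1$ fashion. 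Augmenting with $L_1(x) = x$ and $L_{n+1}(x) = x + (y_n - 1)$ yields the desired $\go$, and every step in the construction is forced, giving uniqueness. The one point requiring genuine care is verifying that each $L_j$ really defines an increasing homeomorphism of $(x_{j-1}, x_j)$ onto $(y_{j-1}, y_j)$ with no interior pole; this follows cleanly from $\lambda_j > 0$ in the explicit parametrization, and this positivity bookkeeping is the main mild obstacle.
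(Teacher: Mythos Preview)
Your proof is correct and follows essentially the same route as the paper: both arguments hinge on the identity $L'(p)L'(q)=\bigl((L(q)-L(p))/(q-p)\bigr)^2$ for M\"obius maps, use it to set up the two-term recursion on the one-sided derivatives at the breakpoints starting from $\go'(0)=\go'(1)=1$, and then reconstruct $\go$ piece by piece for the converse. Your treatment is slightly more explicit in one respect---you write down the parametrization $(L(x)-P)/(Q-L(x))=\lambda(x-p)/(q-x)$ and use $\lambda_j>0$ to verify that each $L_j$ is an increasing homeomorphism of $(x_{j-1},x_j)$ onto $(y_{j-1},y_j)$ with no interior pole---whereas the paper leaves this point implicit.
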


\begin{remark}
By Corollary \ref{pw}, this theorem gives a parameterization of the family of $C^1$ piecewise 
 geodesic Jordan curves in terms of the real coordinates $(x_2, \ldots x_{n-1}, y_2, \ldots, y_n)$.  Together with the constraint \eqref{product}, this family has real dimension $2n-4$ which coincides with the dimension of the Teichm\"uller space of the $(n+1)$-punctured sphere.  
\end{remark}

\begin{proof}
The proof is based on the following elementary identity which holds for all \Mo{} maps $T$.
If $a\ne b$, then
\begin{equation}\label{identity}
T'(a)T'(b)=\left(\frac{T(b)-T(a)}{b-a}\right)^2.
\end{equation}
Given $a,b,\alpha,\beta, \delta$ then there exist a unique \Mo{} map $T$ such that
$T(a)=\alpha$, $T(b)=\beta$ and $T'(a)=\delta$. Indeed, if
\begin{equation*}
S_1(z)=\frac{\delta(a-b)}{\alpha-\beta}\frac{z-a}{z-b} ~{\rm and}~
S_2(z)=\frac{z-\alpha}{z-\beta},
\end{equation*}
then $T= S_2^{-1}\circ S_1$ works. Moreover if $T_1$ is another such map then
$S=S_2\circ T_1\circ S_1^{-1}$ satisfies $S(0)=0$, $S(\infty)=\infty$ and $S'(0)=1$, so
$S(z)=z$ and $T_1=T$.
The identity (\ref{identity}) allows us to
determine $T'(b)$ from $T(b)-T(a)$, $b-a$ and $T'(a)$.

Suppose $\go$ exists.
Because $\go$ is linear on $(-\infty,0)$ and on $(1,+\infty)$ and 
$\go'(\infty)=1$, we have $\go'(0)=1=\go'(1)$.
By (\ref{identity}) 
\begin{equation}
\go'(x_{j+1})=\left(\frac{\go(x_{j+1})-\go(x_j)}{x_{j+1}-x_j}\right)^2
\frac{1}{\go'(x_j)}.
\end{equation}
So we successively find a formula for $\go'(x_{j+1})$, $j=1,\dots,n-1$, 
starting with $\go'(x_1)=1$.
Because $\go'(1)=1$, by induction (\ref{product}) must hold. Here we note that
individual ratios in (\ref{product}) are all positive, so that the square is not needed
in the statement.
The map $\go$ restricted to each interval
$(x_j,x_{j+1})$, $j=1,\dots,n-1$, is uniquely determined from $\{x_j\}$, $\{y_j\}$ once we know
$\go'(x_j)$. 
Note that we must have $\go(x)=x$, on $(-\infty,0)$ and $\go(x)=x-1+y_n$ on
$(1,+\infty)$, because $\go(0)=0$ and $\go(1)=y_n$.

Conversely, assume (\ref{product}) holds. Define $\go(x)=x$, for $x < 0$. 
Then $\go'(0)=1$, so we can uniquely define
$\go$ on each subsequent interval because of (\ref{identity}). Because (\ref{product}) holds, 
we obtain $\go'(1)=1$. Then set $\go(x)=x-1+y_n$ for $ x > 1$.
\end{proof}

If we do not assume that $\go'(\infty)=1$ then the statement of the corresponding result
is somewhat more complicated. We must split the result into two cases, when $n$ is
even and when $n$ is odd. When $n$ is even, there is only one possible derivative at
$\infty$, and it is given in terms of the product. 
When $n$ is odd, any choice of the derivative at $\infty$ will produce a
welding map for these intervals, if and only if (\ref{product}) holds. 
In the latter case, once the derivative at $\infty$ is
chosen, the welding map for the set of intervals is unique.
We are interested, though,  in $C^1$ piecewise geodesic curves. 
If $\go$ is a welding map for a Jordan curve $\gamma$ with respect to a region $\gO$,
then so is the normalization of $\go$. So characterizing the normalized maps gives a characterization of the equivalence classes
of all piecewise \Mo{} welding maps with continuous derivatives, modulo pre and post composition by \Mo{} maps.

We obtain another proof of Remark~\ref{rem:three_point_Sch}:
\begin{cor}\label{cor:three_points}
A $C^1$ piecewise geodesic Jordan curve with three edges is a circle.
\end{cor}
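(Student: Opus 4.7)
The plan is to derive the corollary as an immediate consequence of Theorem~\ref{pwclosed}, bypassing the welding formalism used in Theorem~\ref{weldthm} (which would require one to unpack the single constraint~(\ref{product}) with $n=3$ and verify that it forces the welding to be Möbius on all of $\wR$). First I would reduce to the case $\gamma \subset \C$: if one of the vertices happens to be $\infty$, pre-compose with a Möbius transformation of $\wC$ sending $\infty$ to a point off $\gamma$, noting that this preserves both the class of $C^1$ piecewise geodesic Jordan curves and the class of circles in $\wC$.

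Next I would apply Theorem~\ref{pwclosed} to write
\begin{equation*}
S_f(z) = \sum_{j=1}^{3} \frac{c_j}{z-\zeta_j},
\end{equation*}
subject to $\sum_j c_j = \sum_j c_j \zeta_j = \sum_j c_j \zeta_j^2 = 0$. Regarded as a linear system in $(c_1,c_2,c_3)$, the coefficient matrix is the $3\times 3$ Vandermonde matrix in the three distinct points $\zeta_1, \zeta_2, \zeta_3$, which is invertible. Hence $c_1 = c_2 = c_3 = 0$ and $S_f \equiv 0$ on $\wC \smallsetminus \gamma$. (Equivalently, as noted right after the theorem, $S_f(z) = q(z)\prod_{j=1}^3(z-\zeta_j)^{-1}$ with $\deg q \le n - 4 = -1$, so $q \equiv 0$.)

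Finally, since $S_{f_+} \equiv 0$ on $\Omega_+$, the standard fact recorded just before Corollary~\ref{SDpair} (that vanishing Schwarzian characterizes Möbius maps up to post-composition) implies that $f_+$ is the restriction to $\Omega_+$ of some Möbius map $\sigma$ of $\wC$. Then $\gamma = \partial\Omega_+ = \sigma^{-1}(\wR)$ is the Möbius image of the extended real line, and therefore is a circle in $\wC$.

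The argument is essentially routine once Theorem~\ref{pwclosed} is in hand; the only real content is the rank of the Vandermonde system, and there is no substantive obstacle. The main point worth flagging is the preliminary reduction ensuring that no vertex lies at $\infty$, so that the finite form of Theorem~\ref{pwclosed} with three terms and three moment conditions applies directly.
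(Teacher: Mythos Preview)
Your proof is correct and essentially fleshes out the argument already sketched in Remark~\ref{rem:three_point_Sch}: the three moment conditions~\eqref{sums} on three residues form an invertible Vandermonde system, so $S_f\equiv 0$ and $f_+$ is M\"obius. The paper's written proof of the Corollary deliberately takes the other route, via Theorem~\ref{weldthm}: with $n=2$ in that normalization, the single constraint~\eqref{product} reads $(y_2-y_1)/(x_2-x_1)=1$, forcing $y_2=1$ and hence $\omega$ equal to the identity on~$\wR$. The paper presents the Corollary explicitly as ``another proof'' of the Remark, so both arguments appear there; you have simply chosen the Schwarzian one and supplied the details (including the reduction to $\gamma\subset\C$) that the Remark leaves implicit. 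The welding argument illustrates that Theorem~\ref{weldthm} independently recovers the same rigidity, while yours is the more direct consequence of Theorem~\ref{pwclosed}.
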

\begin{proof}
By Corollary~\ref{pw}, the welding homeomorphism $\go$ of a $C^1$ piecewise geodesic Jordan curve with three edges can be chosen to be piecewise M\"obius as in Theorem~\ref{weldthm} with $n = 2$, $0 = x_1 < x_2 = 1$, $0 = y_1 < y_2$, and $(y_2 -y_1)/ (x_2-x_1) = 1$. It implies that $y_2 = x_2 = 1$. Therefore $\go$ is the identity map on  $\wR$ which implies that the welded curve is a circle.
\end{proof}

\bigskip

Our explicit construction of geodesic pairs also gives a little more information about the 
behavior of a piecewise geodesic curve
$\gamma=\gamma_1\cup\dots\gamma_n$, $n\ge 3$, near a point $\zeta_j=\gamma_j\cap\gamma_{j+1}$.
Applying a conformal map of $\wC\smallsetminus (\gamma\smallsetminus(\gamma_j\cup\gamma_{j+1}))$ onto
$\D$, it is enough to investigate the behavior of geodesic pairs in
$(\D;\ee^{\ii\theta},-\ee^{-\ii\theta},0)$ near $0$.
If $\gamma$ is a Jordan curve with continuous tangent at
$\zeta$ then the conformal maps $f_\pm$ of the two regions in 
$\wC\smallsetminus \gamma$ onto $\H$ and $\bL$ respectively are
$C^{1-\eps}$ for $\eps>0$, but not necessarily in $C^1$. See \cite{GM}, Section II.4.

\begin{cor} \label{smoothness}
A $C^1$ piecewise geodesic Jordan curve 
is piecewise analytic and in
$C^{2-\gve}$ for all $\gve > 0$.
\end{cor}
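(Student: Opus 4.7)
The plan is to separate the two claims: analyticity on each edge away from vertices, and $C^{2-\varepsilon}$ regularity at the vertices themselves.

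The piecewise analytic part is automatic from the definition: each edge $\gamma_j$ is a hyperbolic geodesic in some simply connected domain, hence is an analytic curve (as noted in Section~2). So the content of the corollary is the global $C^{2-\varepsilon}$ regularity, and this need only be checked at each vertex $\zeta_j = \gamma_j \cap \gamma_{j+1}$.

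To handle a vertex, I would reduce to the model case via Theorem~\ref{unique}. The pair $\gamma_j \cup \gamma_{j+1}$ is the $C^1$ geodesic pair in $(\wC \smallsetminus (\gamma \smallsetminus (\gamma_j \cup \gamma_{j+1})); a,b,\zeta_j)$, so by Theorem~\ref{unique} it is the image, under a conformal map that extends to be holomorphic in a neighborhood of $\zeta_j$, of the explicit pair constructed in Corollary~\ref{discpair} inside $(\D; e^{i\theta}, -e^{-i\theta}, 0)$ for a unique $\theta \in [-\pi/2, \pi/2]$. Since conformal maps are real-analytic (and in particular $C^\infty$) in the interior, the regularity of $\gamma$ at $\zeta_j$ coincides with the regularity of the model pair at $0$. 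So everything reduces to analyzing the local shape of the curve $G_\theta^{-1}(\R) \cup$ (its reflection) near $0$.

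The key computation is to invert $G_\theta(z) = \frac{1}{2z} + \frac{z}{2} - ic \log z$, with $c = \sin\theta$, near $z=0$ (equivalently, for $|w|$ large). Writing $s = 1/w$ and iterating the relation $z = \tfrac{s}{2}(1 + z^2 - 2icz \log z)$ yields the asymptotic expansion
\begin{equation*}
z(s) = \frac{s}{2} - \frac{ic}{2}\, s^2 \log s + O(s^2),
\end{equation*}
with the $O(s^2)$ remainder having only higher-order logarithmic singularities. Differentiating, $z''(s) = -ic \log s + O(1)$, which is locally integrable to every power but not bounded, so $z'$ is $(1-\varepsilon)$-H\"older and $z$ is $C^{2-\varepsilon}$ as a function of $s$, for every $\varepsilon>0$. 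Since $z'(0) = 1/2 \neq 0$, the change of variable to arclength is a local $C^{2-\varepsilon}$ diffeomorphism, so the same regularity holds in arclength. The analogous expansion on $\gamma_2$ is obtained by the Schwarz reflection used in Corollary~\ref{discpair}, and because the two one-sided tangents at $0$ coincide (Corollary~\ref{discpair}, horizontal tangent), the two pieces glue into a single $C^{2-\varepsilon}$ arclength parametrization through $\zeta_j$.

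The main obstacle is making rigorous the error control in the inversion of $G_\theta$: one must show that after subtracting the explicit $s \log s$ term, the remainder is genuinely $O(s^2)$ up to at worst $s^3 \log^2 s$, rather than producing new $s^2 \log s$ contributions that would spoil the H\"older bound on $z''$. I would carry this out either by a Picard iteration in a weighted space of functions with controlled logarithmic singularities, or by setting up the implicit-function argument directly after the change of variable $z = s\,\psi(s, \log s)$, where $\psi$ becomes a smooth function of two independent variables near the origin. Once this error analysis is in place, combining it with the reduction to the model and the arclength reparametrization gives the full corollary; note that the improvement to analyticity in the degenerate case $\sin\theta = 0$ (when the log term is absent and $\gamma$ is actually a geodesic of $D$) can be read off from the same expansion.
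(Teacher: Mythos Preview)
Your reduction to the model pair in $(\D;e^{i\theta},-e^{-i\theta},0)$ via Theorem~\ref{unique} matches the paper exactly, and your asymptotic expansion of the inverse of $G_\theta$ is correct. The difference is in how the final regularity is extracted. You invert $G_\theta$ to get an arclength-like parameterization $z(s)$ and then have to control the iterated logarithmic error terms---precisely the step you flag as the ``main obstacle.'' The paper avoids this inversion altogether: it works with the \emph{forward} map $H(z)=1/G_\theta(z)$, which sends $\gamma$ onto $\R$ with $H(0)=0$, computes directly
\[
H'(z)=\frac{2(1+2icz-z^2)}{(1-2icz\log z+z^2)^2}=2+8icz\log z+O(z),
\]
and observes that $(H'(z)-H'(0))/z^{1-\varepsilon}$ is continuous at $0$. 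Then Lemma~II.4.4 of \cite{GM} converts this derivative estimate for the straightening map into $C^{2-\varepsilon}$ regularity of the curve.

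Both approaches work. Yours is more self-contained (you would not need to cite the Garnett--Marshall lemma) but requires you to actually carry out the error analysis you outline; the paper's route is shorter because the regularity-transfer lemma absorbs exactly that work. If you prefer your version, the cleanest way to close the gap is your second suggestion: write $z=s\,\psi(s,\log s)$ and note that the equation $2\psi = 1 + s^2\psi^2 - 2ics\psi(\log s + \log \psi)$ determines $\psi$ as a smooth function of the two variables $(s,u)$ near $(0,\cdot)$ by the implicit function theorem, which immediately gives the needed remainder bounds.
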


\begin{proof} Piecewise geodesic curves are piecewise analytic by definition. To show the regularity near vertices, it is enough to prove the corollary for geodesic pairs
in $(D;\ee^{\ii\theta},-\ee^{-\ii\theta},0)$.
Fix $\theta$ and set $G=G_\theta$ as in (\ref{Gc}) and let
$\gamma=\gamma_1\cup\gamma_2$ be the corresponding geodesic pair in  $\D$.
Set $H(z)=1/G(z)$. Then  $H$  maps $\gamma$
onto the real axis with $H(0)=0$. Then
\begin{equation}
H'(z)= \frac{2(1+2ic z-z^2)}{(1-2ic z\log z +z^2)^2}= 2 + 8ic z \log z
+{\rm O}(z).
\end{equation}

Thus $(H'(z)-H'(0))/z^{1-\gve}$ is continuous at $0$, for $\gve > 0$.
By Lemma II.4.4 in \cite{GM}, $\gamma$ is in
$C^{2-\gve}$, for $\gve>0$. 

This result can also be deduced using a result in 
\cite{wong} on Loewner slits.
\end{proof}

We remark that Corollary \ref{smoothness} implies that a piecewise \Mo{} welding map $\go$ in
$C^1$ is in 
$C^{2-\eps}$, a fact that can more easily be deduced directly from its definition.

\subsection{Piecewise geodesic Jordan curves with spirals}

Now we turn to general piecewise geodesic Jordan curves $\gamma$. From Corollaries~\ref{local} and \ref{Dlocal}, $\gamma$ has a logarithmic spiral at the vertices  where $\gamma$ does not have continuous (unit)
tangent. 
\begin{definition}\label{defspiralrate}
If $\gamma(t)=\zeta_0 +\ee^{r(t)+\ii\varphi(t)}$ is a spiral with ``eye'' at $\zeta_0$, then the {\bf spiral rate} of $\gamma$ at $\zeta_0$ is given by
\begin{equation}\label{spiralrate} 
R(\gamma,\zeta_0)=\lim_{r\to 0} \dfrac{\partial \varphi}{\partial r}
\end{equation}
\end{definition}
So $\gamma$ has a large spiral rate at $\zeta_0$ if $|\arg \gamma(t)|$ increases rapidly as $\gamma(t) \to \zeta_0$, and it has a small spiral rate if $|\arg \gamma(t)|$ increases slowly as $\gamma(t)\to \zeta_0$. If $R(\gamma,\zeta_0)$ is positive, then $\gamma$ spirals clockwise as $\gamma(t)\to \zeta_0$. If $R(\gamma,\zeta_0)$ is negative, then $\gamma$ spirals counter-clockwise as $\gamma(t)\to \zeta_0$.

If $S_\theta$ is the spiral given in Example \ref{nonsmooth}, then we can write
$S_\theta(t)=e^{r(t)+\ii\varphi(t)}$ where
$$r(t)=\cos^2{\theta}(t-\pi \tan \theta)$$
and
$$\varphi(t)=\cos^2{\theta}(\pi+t\tan{\theta}),$$
so that the spiral rate at $0$ is given by
\begin{equation}\label{R(S)}R(S_\theta,0)=\tan{\theta}.
\end{equation}
Recall that an associated welding map is given by (\ref{spiralweld}) with $a=\exp(-2\pi \tan \theta)$. Thus
\begin{equation}
\label{eq:R_derivative}
R(S_\theta,0)=\dfrac{1}{2\pi}
\log \dfrac{\om'(0^+)}{\om'(0^-)}.
\end{equation}

This allows us to compute the jump in the derivative of the welding map for a piecewise geodesic curve in terms of the geometry of the curve, as follows.

\begin{lemma}\label{jump}
Suppose $\gamma=\gamma_1\cup\dots\cup\gamma_n$ is a piecewise geodesic Jordan curve,  with piecewise \Mo{} welding map $\go$.
Set $\zeta_j=\gamma_j\cap\gamma_{j+1}$. 
If $x_j \in \partial \H$ corresponds to $\zeta_j \in \gamma$, then $\go'(x)$ has one-sided
limits $\go'(x_j^+)$ and $\go_j'(x_j^-)$ at $x_j$. 
The ``jump''
in the derivative at $x_j$ is given by
$$\frac{\go'(x_j^+)}{\go_j'(x_j^-)}=e^{2\pi R(\gamma,\zeta_j)}.$$
\end{lemma}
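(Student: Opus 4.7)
The approach is to reduce to the explicit spiral welding (\ref{spiralweld}) via the local equivalence in Corollary~\ref{local}. Fix $j$ and write $\zeta = \zeta_j$, $x = x_j$. Since $\go$ is M\"obius on each of the two adjacent intervals at $x$, the one--sided derivatives $\go'(x^{\pm})$ exist and are positive; since $\go$ is continuous at $x$, $\go(x^-) = \go(x^+)$, which is important for the cancellation used later.

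I would first dispose of the tangent case. If $\gamma$ has a continuous unit tangent at $\zeta$, then Theorem~\ref{unique} applied to the geodesic pair $\gamma_j\cup\gamma_{j+1}$ in $D = \wC \smallsetminus (\gamma \smallsetminus (\gamma_j\cup\gamma_{j+1}))$ shows that its welding is, up to M\"obius equivalence, given by (\ref{eq:pairweld}); this is a piecewise affine map with equal slopes on both sides, so $\go'(x^+)/\go'(x^-) = 1$. No spiral is present, $R(\gamma,\zeta) = 0$, and both sides of the claimed identity equal $1$.

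Otherwise $\gamma$ has no continuous tangent at $\zeta$. By Corollary~\ref{Dlocal} the pair $\gamma_j\cup\gamma_{j+1}$ is asymptotically a logarithmic spiral at $\zeta$, and by Corollary~\ref{local} the local welding of $\gamma$ near $x$ agrees, after pre-- and post--composition by increasing M\"obius maps $\sigma_2,\sigma_1$ preserving $\H$, with the spiral welding $\go_0$ from (\ref{spiralweld}) for some $\theta \in (-\pi/2,\pi/2)$. Differentiating $\go = \sigma_1 \circ \go_0 \circ \sigma_2$ and forming the one--sided ratio at $x$ (so $\sigma_2(x) = 0$), the factor $\sigma_2'(x)$ is common to both sides and cancels, and because $\go_0$ is continuous at $0$ the factor $\sigma_1'(\go_0(0^{\pm}))$ is the same on both sides and also cancels. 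Hence
\[
\frac{\go'(x^+)}{\go'(x^-)} = \frac{\go_0'(0^+)}{\go_0'(0^-)} = \frac{1}{a} = e^{2\pi\tan\theta},
\]
using (\ref{spiralweld}). To close the argument I would invoke the conformal invariance of the spiral rate: the conformal map $F$ produced by Corollary~\ref{local} satisfies $F(z) - \zeta = F'(0)\,z + O(z^2)$ with $F'(0)\ne 0$, so it transforms $e^{r + i\varphi}$ into $e^{(r + \log|F'(0)|) + i(\varphi + \arg F'(0))}(1 + O(e^r))$, whence the limiting ratio $\partial\varphi/\partial r$ is unchanged. Therefore $R(\gamma,\zeta) = R(S_\theta,0) = \tan\theta$ by (\ref{R(S)}), and the lemma follows.

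The main obstacle I anticipate is tracking orientations through the local identification. The matching of $S_\theta \cup (-S_\theta)$ with $\gamma_j\cup\gamma_{j+1}$ depends on which edge plays the role of the $x>0$ side of the welding, which component of $\gO \smallsetminus \gamma$ is labeled $\gO_+$, and how the parametrization in Definition~\ref{defspiralrate} inherits a sign. M\"obius normalization absorbs shifts and positive scalings but not a reflection, so one must check that the convention forces $\tan\theta$ to emerge with the correct sign on both the geometric side (spiral rate $R(\gamma,\zeta)$) and the analytic side (ratio of one--sided derivatives).
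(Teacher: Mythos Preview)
Your proof is correct and follows essentially the same route as the paper: both arguments observe that the ratio $\go'(x_j^+)/\go'(x_j^-)$ is invariant under pre- and post-composition by increasing \Mo{} maps, invoke Corollary~\ref{local} together with Example~\ref{weldnonsmooth} to identify the local welding with the spiral model $\go_0$, note the conformal invariance of the spiral rate, and finish with (\ref{R(S)}) and (\ref{eq:R_derivative}). Your version is slightly more explicit---you spell out the chain-rule cancellation and treat the $C^1$ case separately via Theorem~\ref{unique}, whereas the paper simply records the invariance and handles only the case $a\ne 1$---and your closing remark on orientation is a fair caution, but none of this changes the substance of the argument.
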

\begin{proof} Set $1/a= \frac{\go'(x_j^+)}{\go_j'(x_j^-)}$. By the chain rule, $a$ does not change if we apply a \Mo{} map of $\H$ onto $\H$, so that $a$ does not depend on our choice of the welding map for $\gamma$. If $F$ is conformal near $\zeta_j$ then the spiral rate of $F(\gamma)$ at $F(\zeta_j)$ is the same as the spiral rate of $\gamma$ at $\zeta_j$.
If $a\ne 1$ then by Corollary \ref{local} and Example \ref{weldnonsmooth},
$\gamma$ is the conformal image of $S_\theta$ near
$\zeta_j$ where $\log a = -2\pi \tan \theta$. The lemma then follows from (\ref{R(S)}) and  \eqref{eq:R_derivative}.
\end{proof}

If $x_j=\infty$, then the jump is $\lim_{x\to +\infty}\omega(x)/\lim_{x\to -\infty} \omega(x)$, as can be seen by replacing $\omega$ with $\varphi\circ\omega\circ\varphi^{-1}$ where $\varphi(z)=1/(1-z)$, then computing the jump at $0=\varphi(\infty)$.

We remark that for a piecewise geodesic curve $\gamma$,  because $\dfrac{\partial \varphi}{\partial r}$ is constant for
the curve $S_\theta$, the spiral rate is also equal to
\begin{equation}\label{sr2}R(\gamma,\zeta_j)=\lim \dfrac{2\pi}{\log \dfrac{|\alpha-\zeta_j|}{|\beta-\zeta_j|}},
\end{equation}
where the limit is taken over $\alpha,\beta \ni \gamma \to \zeta_j$ such that $\arg (\alpha-\zeta_j)=2\pi(k+1)$ and $\arg(\beta-\zeta_j)=2\pi k$. This gives the spiral rate in terms of the asymptotic decay rate of $|\gamma-\zeta_j|$ as $\gamma$ loops once around $\zeta_j$.

A similar discussion of the relationship between the jump in the welding map and the spiral rate can be found in \cite{KNS}, although with a different definition of the spiral rate.

\begin{figure}
    \centering
    \includegraphics [width=.5\textwidth, angle=-90]{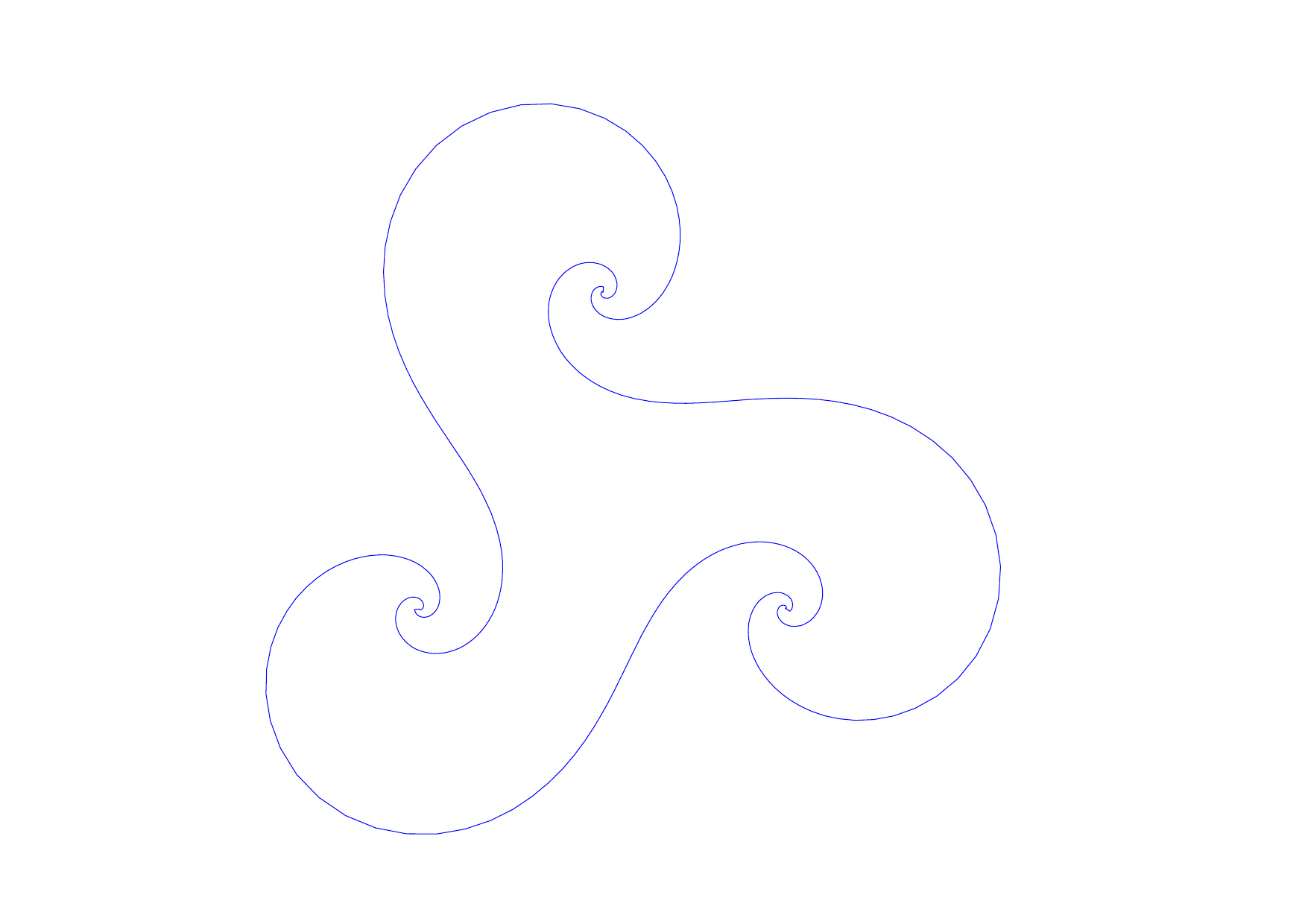}
    \caption{\label{3spirals} A piecewise geodesic curve consisting of three spirals with spiral rate 2, reminiscent of the Triskele, one of the oldest Irish Celtic symbols.}
\end{figure} 

\bigskip

\begin{thm}\label{schwarzian}
Suppose $\gamma\subset\wC$ 
is a piecewise geodesic Jordan curve, analytic except at
$\zeta_1,\dots, \zeta_n\in \C$. 
If $\gO_+, \gO_-$ are the two regions given by $\wC\smallsetminus \gamma$, and if
$f_+, f_-$ are conformal maps onto $\H$ and $\bL$ respectively, set
$f=f_+$ on $\gO_+$ and $f=f_-$ on $\gO_-$. Let $R_j$ be the geometric spiral rate
of $\gamma$ at $\zeta_j$ given in
(\ref{spiralrate}) and (\ref{sr2}).
Then the Schwarzian derivative, 
$S_f$, of $f$ extends to be meromorphic in $\wC$ and is given by
$$S_f(z)=\sum_{j=1}^n 
\frac{\half R_j^2+\ii R_j}{(z-\zeta_j)^2}+ \frac{c_j}{z-\zeta_j},$$
for some constants $c_j$. Near $\infty$, $S_f= O(z^{-4})$.
\end{thm}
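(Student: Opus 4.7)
The plan is to generalize the argument of Theorem \ref{pwclosed} by permitting double poles at vertices where $\gamma$ is not $C^1$, with the coefficient of the double pole extracted from the explicit spiral model of Example \ref{nonsmooth}. First I would show that $S_f$ extends to a meromorphic function on $\wC$ whose only possible singularities are the vertices $\zeta_j$. Away from the vertices, each edge $\gamma_i$ is a hyperbolic geodesic in $\wC\smallsetminus(\gamma\smallsetminus\gamma_i)$, so by Lemma \ref{hypgeo} the welding is M\"obius along $\gamma_i^\circ$. As in the proof of Corollary \ref{SDpair}, this forces $f_+$ and $f_-$ to be related by Schwarz reflection composed with a M\"obius transformation across $\gamma_i^\circ$, and hence $S_{f_+}$ and $S_{f_-}$ glue to a holomorphic function across every analytic portion of $\gamma$.

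The new ingredient is the local analysis at a vertex $\zeta_j$ with spiral rate $R_j$. Combining Corollary \ref{Dlocal} applied to the geodesic pair $\gamma_j\cup\gamma_{j+1}$ in $\wC\smallsetminus(\gamma\smallsetminus(\gamma_j\cup\gamma_{j+1}))$ with Corollary \ref{local} and Example \ref{weldnonsmooth}, there is a conformal map $F$ from a neighborhood of $\zeta_j$ to a neighborhood of $0$ with $F(\zeta_j)=0$ that sends $\gamma$ locally to the spiral pair $S_{\theta_j}\cup(-S_{\theta_j})$, where $\tan\theta_j=R_j$ by \eqref{R(S)}. For that spiral model the conformal map of the two components of $\C\smallsetminus(S_{\theta_j}\cup(-S_{\theta_j}))$ onto $\H\cup\bL$ is, up to post-composition with a M\"obius transformation, $h(w)=w^{1-iR_j}$; a direct computation with $\alpha=1-iR_j$ gives
\begin{equation*}
S_h(w) = \frac{1-\alpha^2}{2w^2} = \frac{\tfrac{1}{2}R_j^2+iR_j}{w^2}.
\end{equation*}

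Since $f$ and $h\circ F$ differ by M\"obius post-composition, $S_f = S_{h\circ F}$, and the composition rule \eqref{Schwarzcomp} gives
\begin{equation*}
S_f(z) = (F'(z))^2\,S_h(F(z)) + S_F(z).
\end{equation*}
Because $F$ is conformal at $\zeta_j$, $S_F$ is holomorphic there, and expanding $F$ at $\zeta_j$ yields $(F'(z)/F(z))^2 = (z-\zeta_j)^{-2} + (F''(\zeta_j)/F'(\zeta_j))(z-\zeta_j)^{-1} + O(1)$; substituting,
\begin{equation*}
S_f(z) = \frac{\tfrac{1}{2}R_j^2+iR_j}{(z-\zeta_j)^2} + \frac{c_j}{z-\zeta_j} + O(1) \text{ near } \zeta_j,
\end{equation*}
for some constant $c_j$. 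For the behavior at $\infty$, since $\infty$ lies in one of the regions $\gO_\pm$ where $f$ is conformal, $\tilde f(w):=f(1/w)$ is conformal at $w=0$; applying \eqref{Schwarzcomp} to $f = \tilde f\circ g$ with $g(z)=1/z$ (M\"obius, so $S_g\equiv 0$) yields $S_f(z) = S_{\tilde f}(1/z)/z^4 = O(z^{-4})$ as $z\to\infty$. The difference between $S_f$ and the proposed sum is then an entire function on $\wC$ that vanishes at $\infty$, hence is identically zero by Liouville's theorem.

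The main obstacle is the second step: one must verify that the geometric spiral rate \eqref{spiralrate} is consistent with the exponent $1-iR_j$ appearing in the model map $h$, via \eqref{R(S)}, \eqref{eq:R_derivative}, and Lemma \ref{jump}, so that the coefficient $\tfrac{1}{2}R_j^2+iR_j$ has the correct sign. One must also check that the local conformal equivalence supplied by Corollary \ref{local} simultaneously aligns both edges meeting at $\zeta_j$ rather than just one side of the vertex, which follows because the spiral model and $\gamma$ share the same M\"obius-equivalent welding in a neighborhood of $\zeta_j$.
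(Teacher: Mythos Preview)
Your proposal is correct and follows essentially the same approach as the paper: both arguments use Corollary \ref{local} and Example \ref{nonsmooth} to write $f$ locally as $h\circ F$ with $h(w)=w^{1-iR_j}$ and $F$ conformal at $\zeta_j$, compute $S_h(w)=(\tfrac12 R_j^2+iR_j)/w^2$, and apply the composition rule \eqref{Schwarzcomp}. Your write-up is somewhat more explicit about the analytic continuation of $S_f$ across the open edges, the $O(z^{-4})$ estimate at $\infty$, and the final Liouville step, and you correctly flag (and resolve) the point that the local conformal equivalence must align both edges at once; the paper treats these as understood from the proof of Theorem \ref{pwclosed}.
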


\begin{proof}
By Corollary \ref{SDpair}, if $\gamma$ has a continuous tangent at $\zeta_j$ then $S_f$ has a simple pole at
$\zeta_j$. If $\gamma$ does not have a continuous tangent at $\zeta_j$ then to examine the local behavior
of $S_f$ near $\zeta_j$, 
by Lemma \ref{local}, (14), and Example \ref{nonsmooth}
we can write $f= g \circ \varphi$
where $\varphi$ is analytic and one-to-one in a neighborhood of $\zeta_j$ with $\varphi(\zeta_j)=0$ and $g(z)=e^{(1-\ii\tan \theta)\log z}$, 
for the appropriate $\theta$.
Then $S_g(z)=\lambda z^{-2}$, where $\lambda= \ii \tan\theta +\half (\tan \theta)^2$.
By the identity (\ref{Schwarzcomp}), 
$S_{g\circ \varphi}(z)=\frac{\lambda}{(z-\zeta_j)^2}+\dots$ has a double pole
at $\zeta_j$ with the same coefficient $\lambda$. By (\ref{R(S)}),
this proves the expansion stated in the theorem at points where the curve is not continuously differentiable.
\end{proof}

Corollary \ref{smoothness} and Theorem \ref{schwarzian}
yield the dichotomy that a piecewise geodesic curve is either $C^{2-\varepsilon}$ for every $\varepsilon >0$ at a vertex or a logarithmic spiral at a vertex.

A welding map of the form (\ref{pwMo}) with $a_1\ne a_2$ can be conjugated using linear maps to an equivalent welding of the form
\begin{equation}
\go(x)=\begin{cases}
x & \text{for}~ x > c_1,\cr
ax & \text{for}~ x < c_2,\cr
\end{cases} 
\end{equation}
with $c_2 \le c_1$. If $c_1 >0$ and $c_2 < 0$, 
we can then use the examples created earlier for welding maps $h$ of the form
$h(x)=ax$, $x>0$ and $h(x)=x$, when $x <0$ to create geodesic pair (for each $a>0$) in
$\C$ which meet at $0$ and $\infty$ and form logarithmic spirals at $0$ and $\infty$. We reduce
the size of the intervals $x >0$ and $x < 0$ by removing portions of the two spirals from $\C$
to obtain a simply connected region and a geodesic pair with the desired welding map. See Corollary \ref{local}.
(The explicit form of a conformal map from the upper half-plane to the complement in $\C$
of a portion of $S_\theta$ is given in 
\cite{LMR}, Proposition 3.3).   In this way, we
obtain geodesic pairs for all welding maps of the form
\blue{$\go(x)=x$, for $x > c_1$, and $\go(x)=ax$ for $x < c_2$}, provided
$c_2 \le 0 \le c_1$, and of course any welding map conjugate to these.

The maps for which we do not have an explicit construction are when 
\blue{$0 < c_2 \le c_1$ or $ c_2 \le c_1 < 0$}. 
The point is that Example \ref{nonsmooth}
creates an infinite spiral at points corresponding to both $0$ and
$\infty$, the two fixed points of the maps $h(x)=ax$ and $h(x)=x$. 
These fixed points
cannot be interior to the intervals where the welding map is the identity, or where it is
given by $ax$ because the associated maps $f_+^{-1}$ and $f_-^{-1}$ must be analytic where the welding is M\"obius.

\bigskip

\begin{prob} Analogous to \eqref{Gc},
if $a < 1$, find an expression for a conformal map from $\D\smallsetminus\gamma$
onto $\H\cup\bL$, where $\gamma=\gamma_1\cup\gamma_2$ is a geodesic pair
in $\D$, such that the welding
map is given by $h(x)=ax$ for $x <1$ and $h(x)=x$ for $ x > 2$ onto $\mathbb H$. 
\end{prob}


\section{$C^1$ piecewise geodesic curves with four edges}\label{sec:fourlegs}

As we noted in Remark~\ref{rem:three_point_Sch} and Corollary~\ref{cor:three_points}, a $C^1$ piecewise geodesic curve with three edges must be a circle. We now consider $C^1$ piecewise geodesic curves with four edges.

The conformal modulus of a Jordan domain marked by four consecutive boundary points can be defined as the aspect ratio of a rectangle that arises as the image of the domain under a conformal map sending the marked points to the corners. Our description of piecewise \Mo{} weldings Theorem \ref{weldthm} easily gives the following symmetry:

\begin{lemma}\label{modulus} If $\gamma=\gamma_1\cup \dots \cup \gamma_4$ is a $C^1$ 
 piecewise geodesic curve, then the conformal moduli of the two complementary domains $\gO_+$ and $\gO_-$, marked by the vertices $\zeta_1,...,\zeta_4$ of $\gamma$, coincide.
\end{lemma}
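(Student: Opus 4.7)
The plan is to identify the conformal modulus with a cross-ratio of the four vertex images on $\partial\H$ and then use Theorem~\ref{weldthm} to verify that the piecewise \Mo{} welding preserves this cross-ratio.

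First, I would recall that for a Jordan domain $D$ with four marked boundary points in cyclic order, the conformal modulus is a strictly monotone function of the cross-ratio of their images in $\wR$ under any conformal map $D\to\H$ (or $\bL$, via $z\mapsto \overline z$ which fixes $\wR$ pointwise). Consequently, letting $f_\pm$ denote the Riemann maps onto $\H$ and $\bL$ respectively, it suffices to show that the cross-ratios of $(f_+(\zeta_1),\ldots,f_+(\zeta_4))$ and $(f_-(\zeta_1),\ldots,f_-(\zeta_4))$ agree. Because the welding map $\go=f_-\circ f_+^{-1}$ is an increasing homeomorphism of $\wR$, the cyclic orders on both sides automatically agree, so the comparison is well-posed.

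Next I would apply Theorem~\ref{weldthm} with $n=3$, so that the three break points in $\R$ together with $\infty$ account for the four vertices. After pre- and post-composing with increasing \Mo{} maps of $\wR$, one may normalize so that the images of $\zeta_1,\zeta_2,\zeta_3,\zeta_4$ on the $\H$-side are $\infty,\,0,\,x_2,\,1$ with $0<x_2<1$, and their images on the $\bL$-side are $\infty,\,0,\,y_2,\,y_3$ with $0<y_2<y_3$. The product identity \eqref{product} specialized to $n=3$ reads
\[
\frac{(y_3-y_2)\,x_2}{(1-x_2)\,y_2}=1,
\]
which simplifies to $y_3/y_2=1/x_2$. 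A direct computation then gives
\[
\frac{(\infty-x_2)(0-1)}{(\infty-1)(0-x_2)}=\frac{1}{x_2},\qquad \frac{(\infty-y_2)(0-y_3)}{(\infty-y_3)(0-y_2)}=\frac{y_3}{y_2},
\]
and the two cross-ratios coincide, yielding the desired equality of moduli.

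I do not expect any serious obstacle; the content of the argument is simply the observation that condition \eqref{product} with $n=3$ is equivalent to the invariance of the cross-ratio of the four break points under the welding. The only bookkeeping point is to verify that the cyclic orders of the marked points on the two sides of $\gamma$ coincide after conformal uniformization, but this is immediate from the fact that $\go$ is an increasing homeomorphism of $\wR$ in the sense of Section~\ref{weldingmaps}.
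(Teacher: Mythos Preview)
Your proposal is correct and takes essentially the same approach as the paper: both normalize via Theorem~\ref{weldthm} with $n=3$ so that the vertex images are $\infty,0,x_2,1$ and $\infty,0,y_2,y_3$, derive $x_2=y_2/y_3$ from~\eqref{product}, and conclude the moduli agree. The only cosmetic difference is that you phrase the conclusion in terms of equality of cross-ratios, whereas the paper states it as an anti-conformal equivalence between the marked half-planes.
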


\begin{proof} 
By Corollary \ref{pw}, the welding maps associated with $\gamma$ are piecewise \Mo{} homeomorphisms. The conformal maps $f_+:\gO_+\to\H$ and $f_-:\gO_-\to\bL$ can be chosen so that $f_+(\zeta_4)=f_-(\zeta_4)=\infty$ and such that
the welding $\go=f_- \circ {f_+}^{-1}$ satisfies the normalization of Theorem \ref{weldthm} with $x_j=f_+(\zeta_j), y_j=f_-(\zeta_j), j=1,2,3$, \blue{where $x_1=y_1=0$ and $x_3=1$.} Then \eqref{product} gives
$$1 = \frac{y_3-y_2}{x_3-x_2}\cdot \frac{x_2-x_1}{y_2-y_1}\blue{=\frac{y_3-y_2}{1-x_2}\cdot\frac{x_2}{y_2}}$$
and it follows that $x_2=y_2/y_3$. Thus, the upper half-plane, marked by $\infty, 0, x_2, 1$ is anti-conformally equivalent to the lower half-plane
marked by $\infty, 0, y_2, y_3.$
\end{proof}

A special feature of four edge piecewise geodesic curves is that they have more symmetries, namely the invariance under the four conformal automorphisms of $\wC\smallsetminus\{\zeta_1,\zeta_2,\zeta_3,\zeta_4\}$
 that permute the vertices:

\begin{thm}\label{aut}If $\gamma$ is a $C^1$
piecewise geodesic curve and if $T$ is a conformal automorphism of $\wC\smallsetminus\{\zeta_1,\zeta_2,\zeta_3,\zeta_4\}$, then $T(\gamma)=\gamma.$ Moreover, if $T$ is the identity or if $T(\zeta_1)=\zeta_3$ and $T(\zeta_2)=\zeta_4,$ then $T(\gO_+)=\gO_+$ and $T(\gO_-)=\gO_-$,
whereas the other two automorphisms interchange $\gO_+$ and $\gO_-$.
\end{thm}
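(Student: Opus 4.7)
My plan is to show that the Schwarzian derivative is invariant under $T$ in the sense $S_{f \circ T^{-1}} = S_f$ as rational functions on $\wC$, and then deduce $T(\gamma) = \gamma$ from this. By Theorem~\ref{pwclosed} and equation~\eqref{four}, the Schwarzian has the explicit form $S_f(z) = C/Q(z)$ with $Q(z) := \prod_{j=1}^4(z - \zeta_j)$. The composition rule~\eqref{Schwarzcomp} combined with $S_{T^{-1}} = 0$ reduces this invariance to the algebraic identity $Q(T(z)) = Q(z)(T'(z))^2$.

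To verify the identity I represent the three non-identity elements of the Klein four-group as trace-zero Möbius involutions $T = \bigl(\begin{smallmatrix} \alpha & \beta \\ \gamma & -\alpha \end{smallmatrix}\bigr)$ with $\alpha^2 + \beta\gamma = -1$, so $T'(z) = (\gamma z - \alpha)^{-2}$. Expanding
\[ T(z) - \zeta_j = \frac{(\alpha - \gamma\zeta_j)\,z + (\beta + \alpha\zeta_j)}{\gamma z - \alpha} \]
and noting that the root (in $z$) of each linear numerator is $T(\zeta_j) \in \{\zeta_1,\ldots,\zeta_4\}$ gives
\[ Q(T(z)) = K_T \cdot Q(z)(T'(z))^2, \qquad K_T := \prod_{j=1}^4(\alpha - \gamma\zeta_j), \]
so the target identity is $K_T = 1$. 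Each such $T$ permutes the vertices by a double transposition, swapping say $\zeta_i \leftrightarrow \zeta_j$ and $\zeta_k \leftrightarrow \zeta_l$. Its fixed points $p, q$ are the roots of $\gamma z^2 - 2\alpha z - \beta = 0$, and by the harmonic-conjugate characterization of Möbius involutions one has $[p,q;\zeta_i,\zeta_j] = [p,q;\zeta_k,\zeta_l] = -1$. Expanding each cross-ratio condition and setting $m := (p+q)/2 = \alpha/\gamma$ gives $(m-\zeta_i)(m-\zeta_j) = (p-q)^2/4$ for each swapped pair; multiplying the two identities yields $Q(m) = (p-q)^4/16$. Combined with $\gamma^2(p-q)^2 = -4$ from Vieta on $\gamma z^2 - 2\alpha z - \beta$, this produces $K_T = \gamma^4 Q(m) = 1$, and hence $S_{f \circ T^{-1}} = S_f$.

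With the Schwarzian invariance established, $f$ and $g := f \circ T^{-1}$ extend by Schwarz reflection across the piecewise analytic arcs of $\gamma$ and $T(\gamma)$ respectively to multi-valued analytic solutions of the Schwarzian equation $S_u = C/Q$ on the punctured sphere $\wC \smallsetminus \{\zeta_1,\ldots,\zeta_4\}$, both with monodromy contained in the subgroup of Möbius transformations preserving $\R$. Two such global solutions differ by a single Möbius post-composition $\tau$ that conjugates one monodromy representation to the other; since the $\R$-preserving subgroup of $\PSL(2,\C)$ is its own normalizer, $\tau$ itself preserves $\R$. Taking preimages of $\R$ on the principal branches yields $T(\gamma) = g^{-1}(\R) = f^{-1}(\tau^{-1}(\R)) = f^{-1}(\R) = \gamma$.

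The orientation dichotomy then follows from the action of $T$ on the cyclic ordering of the vertices along $\gamma$: the identity and the permutation $(13)(24)$ preserve this cyclic order (the latter as a rotation by two positions), so $T$ preserves the orientation of $\gamma$ and $T(\gO_\pm) = \gO_\pm$; the permutations $(12)(34)$ and $(14)(23)$ reverse the cyclic order, so $T$ reverses the orientation of $\gamma$ and interchanges $\gO_+ \leftrightarrow \gO_-$. The main technical step I expect to require care is the identity $K_T = 1$, which hinges on the harmonic-conjugate computation; the passage from Schwarzian invariance to the curve-level identity $T(\gamma) = \gamma$ is a standard monodromy uniqueness argument once the Schwarzian equality is in place.
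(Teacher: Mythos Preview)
Your approach closely parallels the paper's \emph{alternative} proof (given after the main one): establish $S_{f\circ T}=S_f$ from the explicit form \eqref{four}, deduce $f\circ T=\sigma\circ f$ for some \Mo{} $\sigma$, then show $\sigma$ preserves $\wR$. The paper asserts the Schwarzian invariance directly for the three explicit automorphisms $\tau_1(z)=\zeta/z$, $\tau_2(z)=(z-\zeta)/(z-1)$, $\tau_3=\tau_1\tau_2$; your $K_T=1$ verification via harmonic conjugates is more elaborate but correct (apart from the easily handled affine case $\gamma=0$ in your matrix representation, where $m=\alpha/\gamma$ is undefined). For the passage to $\sigma(\wR)=\wR$, the paper instead observes that the four vertices, lying on both $\gamma$ and $T(\gamma)$, give four real values mapped by $\sigma$ to four real values, and a circle meeting $\wR$ in four points must equal $\wR$.

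Your monodromy argument for this last step has a genuine gap. From $g=\sigma f$ you correctly obtain that $\sigma$ conjugates the monodromy image $\Gamma\subset\PSL(2,\R)$ into $\PSL(2,\R)$, but the inference ``$\PSL(2,\R)$ is its own normalizer, hence $\sigma$ preserves $\wR$'' is a non sequitur: $\sigma$ normalizes only the specific finitely generated subgroup $\Gamma$, not all of $\PSL(2,\R)$. If $\Gamma$ were elementary---say a group of real translations all fixing $\infty$---then $\sigma(z)=z+i$ would conjugate $\Gamma$ to itself without preserving $\wR$. What you actually need is that $\Gamma$ has no global fixed point on $\wC$; this holds for non-circle curves (one can check from the normalized welding of Theorem~\ref{weldthm} that if all four \Mo{} pieces share a fixed point then the welding is the identity), but it requires a separate argument, and the circle case must then be treated on its own.

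For contrast, the paper's \emph{main} proof avoids Schwarzians and monodromy entirely. It normalizes the welding via Theorem~\ref{weldthm} and Lemma~\ref{modulus} so that $\omega$ fixes $0,x,1,\infty$, then for each automorphism $T$ exhibits a \Mo{} map $S$ on $\H$ inducing the same permutation of $\{0,x,1,\infty\}$, and verifies that $T\circ f_\pm^{-1}\circ S$ solves the identical normalized welding problem; uniqueness in Theorem~\ref{weldthm} then forces $T(\gamma)=\gamma$ and determines which side goes to which.
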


\begin{proof}
We have described earlier (see the discussion after Definition \ref{equiv})
that equivalent homeomorphisms of the real line arise as welding homeomorphisms of the same curve.
However, if the homeomorphism and the curve are suitably normalized, then the welding uniquely determines
the curve and the (suitably normalized) conformal maps representing it. 
Our proof is based on this uniqueness, together with our characterization of 
$C^1$ piecewise M\"obius
homeomorphisms.

We may assume that the four points are $0,\zeta,1, \infty$ and that the two conformal maps $f_+$ and $f_-$ are normalized to fix $0,1,\infty.$ Then the welding homeomorphism $\omega=f_-\circ f_+^{-1}$ fixes $0,1, \infty$ and by Lemma \ref{modulus} also fixes $x=f_+(\zeta)\in (0,1).$
Note that $d=\omega'(\infty)>0$ but does not necessarily equal 1.
By Theorem \ref{weldthm} (and the discussion \blue{following its proof}), $\omega$ is the unique 
$C^1$ piecewise \Mo{} homeomorphism fixing
$0,x,1,\infty$ with derivative $d$ at $\infty.$

If $T$ is the automorphism interchanging $0,\infty$ and $\zeta,1$, namely
$T(z)=\zeta/z,$ let $S(z)=x/z$ and notice that $S$ induces the same permutation on $\{0,x,1,\infty\}$ as $T$ on $\{0,\zeta,1,\infty\},$
and that $S$ interchanges $\H$ and $\bL.$ 
The conformal maps $g_+=T\circ f_-^{-1}\circ S$ of $\H$ and $g_-=T\circ f_+^{-1}\circ S$ of $\bL$ solve the same normalized welding problem as 
$f_+^{-1}$ and $f_-^{-1}$: Indeed, the homeomorphism 
$$g_-^{-1}\circ g_+=S^{-1}\circ f_+ \circ f_-^{-1} \circ S= S\circ \omega^{-1}\circ S$$
of $\wR$ is piecewise \Mo{} with continuous derivative, fixes ${0,x,1,\infty}$, and has the same derivative as $\omega$ at $\infty$ by a simple computation (using $\omega^\prime(0)=d$ since $\omega$ is linear on the interval $(-\infty,0)$).
It follows that $g_+=f_+^{-1}$ so that $g_+(\H)=T f_-^{-1}(\bL)$ and therefore $T(\O_+)=\O_-$ and $T(\gamma)=\gamma.$
The same argument applies when $T$ interchanges $0,\zeta$ and $1,\infty$.

If $T$ interchanges $0,1$ and $\zeta,\infty$, the corresponding \Mo{} transformation $S$ that interchanges $0,1$ and $x,\infty$ fixes $\H$ and $\bL,$
and the above reasoning applied to $g_+=T\circ f_+^{-1}\circ S$ on $\H$ and $g_-=T\circ f_-^{-1}\circ S$ on $\bL$ again yields $g_+=f_+^{-1}$, hence
$T(\O_+)=\O_+.$
\end{proof}

An alternative proof, again assuming without loss of generality that the four points are $0$, $\zeta$, $1$, and $\infty$, goes as follows: The maps $\tau_1(z)=\zeta/z$, $\tau_2(z)=(z-\zeta)/(z-1)$, and $\tau_3(z)=\zeta(z-1)/(z-\zeta)=\tau_1\circ\tau_2(z)$ each preserve the set $P=\{0,\zeta,1,\infty\}$, so that if $\gamma$ is a piecewise geodesic Jordan curve which is analytic except at these points, then
$\tau_j^{-1}(\gamma)$ is also a piecewise geodesic Jordan curve which is analytic except at these points. Using the notation of Theorem \ref{pwclosed}, if $f$ maps the two regions determined by $\gamma$ onto $\H \cup \bL$, then $f\circ \tau_j$ maps the two regions determined by $\tau_j^{-1}(\gamma)$ onto $\H \cup \bL$. By Theorem  \ref{pwclosed}, (\ref{Schwarzcomp}), and (\ref{four})
\begin{equation}
    S_{f\circ \tau_j}(z)=S_f(\tau_j(z))(\tau_j^\prime(z))^2=S_f(z).
\end{equation}
Thus there are \Mo{} maps $\sigma_j$ so that
\begin{equation}
f\circ \tau_j=\sigma_j\circ f.
\end{equation}
But $\Im f \ne 0$ if and only if
$z \notin \gamma$, and similarly $\Im f\circ\tau_j \ne 0$ if and only if $z \notin \tau_j^{-1}(\gamma)$. Because $\tau_j(P)=P$, there must be at least $4$ distinct points in $\sigma_j(\wR)\cap \wR$. Because $\sigma(\wR)$ is either a circle or a line, $\sigma_j$ must map $\wR$ onto $\wR$, and hence $\tau_j^{-1}(\gamma)=\gamma$. This equality is an equality of sets. The orientation can be reversed.
Moreover $\tau_j(\Om_+\cup\Om_-)=\Om_+\cup \Om_-$. The last statement in the theorem follows because we can compute the direction  of $\tau_j^{-1}(\gamma)$ by the order of the image points
$\tau_j^{-1}(0), \tau_j^{-1}(\zeta), \tau_j^{-1}(1), \tau_j^{-1}(\infty)$.

\end{document}